\newtheorem{thm}{Theorem}[section]
\newtheorem{cor}[thm]{Corollary}
\newtheorem{lem}[thm]{Lemma}
\theoremstyle{definition}
\newtheorem{defn}[thm]{Definition}
\theoremstyle{remark}
\newcommand{\cycred}[1]{[#1]}
\newcommand{\proj}{\pi}
\newcommand{\vertices}{V}
\newcommand{\Star}[1]{{#1^\star}}
\newcommand{\gfamily}{\mathcal{G}}
\newcommand{\support}[1]{\operatorname{supp}(#1)}
\newcommand{\cyclicsupport}[1]{\operatorname{csupp}(#1)}
\newcommand{\starofCPword}[1]{#1^\star}
\newcommand{\equalsreduced}{\overset{red}{=}}
\newcommand{\co}{\colon\thinspace}
\newcommand{\determinant}{\operatorname{det}}
\begin{document}
\title[Rigidity of graph products]{Rigidity of graph products of abelian groups}
\address{Department of Mathematics, Tufts University, 503 Boston Ave, Medford 02155, USA}
\email{Mauricio.Gutierrez@tufts.edu, Adam.Piggott@tufts.edu}
\thanks{The authors wish to thank George McNinch for a number of helpful conversations during the preparation of this article.}
\date{\today}
\author{Mauricio Gutierrez and Adam Piggott}
\subjclass[2000]{Primary 20E34, 20E06}%; Secondary 20F65}
\keywords{Graph products of groups}

%\keyword{Graph products of groups}
% \subject{primary}{msc2000}{20E34}
% \subject{primary}{msc2000}{20E06}
%\subject{secondary}{msc2000}{20F65}
\maketitle

\begin{abstract}
We show that if $G$ is a group and $G$ has a graph-product
decomposition with finitely-generated abelian vertex groups, then
$G$ has two canonical decompositions as a graph product of groups: a
unique decomposition in which each vertex group is a
directly-indecomposable cyclic group, and a unique decomposition in
which each vertex group is a finitely-generated abelian group and
the graph satisfies the $T_0$ property.  Our results build on
results by Droms, Laurence and Radcliffe.
\end{abstract}

\maketitle

%%%%%%%%%%%%%%%%%%%%   Start of main body of article
\section{Introduction}

A \emph{labeled-graph} is a pair $(\Gamma, \gfamily_\Gamma)$, where
$\Gamma$ is a non-trivial finite simplicial graph with vertex set
$\vertices_\Gamma$ and $\gfamily_\Gamma = \{G_u\}_{u \in
\vertices_\Gamma}$ is a family of non-trivial groups (the
\emph{vertex groups}).  The graph product construction, first
defined in \cite{GreenThesis}, associates a group to each
labeled-graph: the graph product $W(\Gamma, \gfamily_\Gamma)$ is the
quotient of the free product $\ast_{u \in \vertices_\Gamma} G_u$ by
relations that allow elements of $G_u$ and $G_{u'}$ to commute if
$u$ and $u'$ are adjacent in $\Gamma$. The construction interpolates
between the free product construction, in the case that $\Gamma$ is
a discrete graph, and the direct product construction, in the case
that $\Gamma$ is a complete graph. We say that the labeled-graph
$(\Gamma, \gfamily_\Gamma)$ describes \emph{a graph-product
decomposition} of a group $G$ if $G \cong W(\Gamma,
\gfamily_\Gamma)$.

In the present article we study groups which have a graph-product
decomposition with finitely-generated abelian vertex groups (or
equivalently, cyclic vertex groups). A number of important classes
of groups have this property, including finitely-generated abelian
groups, finitely-generated non-abelian free groups, right-angled
Coxeter groups and right-angled Artin groups (also known as `graph
groups').

A \emph{labeled-graph isomorphism} $f\co  (\Gamma, \gfamily_\Gamma)
\to (\Sigma, \gfamily_\Sigma)$ is a bijection $f\co  V_\Gamma \to
V_\Sigma$ for which the following conditions hold:
\begin{enumerate}
\item $\forall u, v \in \vertices_\Gamma \;\; (u, v \hbox{ adjacent
in } \Gamma) \Leftrightarrow (f(u), f(v) \hbox{ adjacent in }
\Sigma)$;
\item $\forall u \in \vertices_\Gamma \;\; G_u \cong G_{f(u)}$.
\end{enumerate}
We write $(\Gamma, \gfamily_\Gamma) \cong (\Sigma, \gfamily_\Sigma)$
in case such a labeled-graph isomorphism exists.

\begin{figure}
\begin{center}
\includegraphics[scale=0.25]{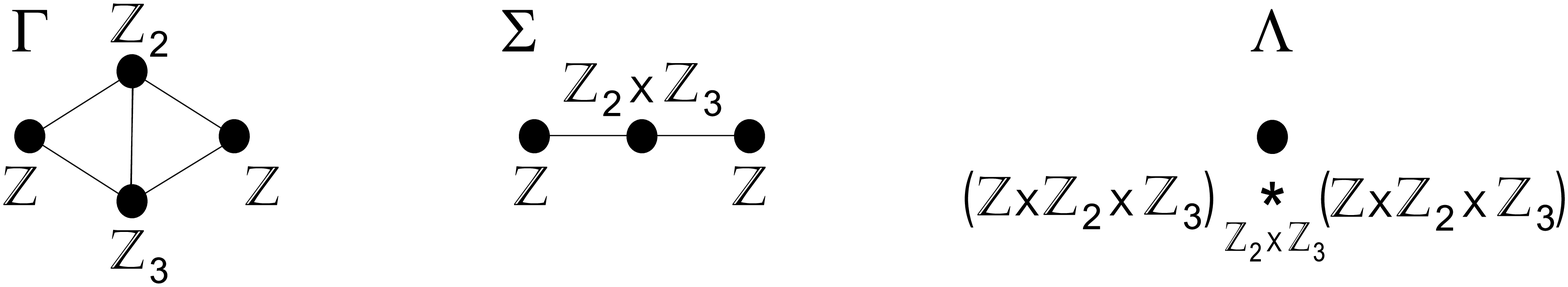}
\caption{$W(\Gamma, \gfamily_\Gamma) \cong W(\Sigma,
\gfamily_\Sigma) \cong W(\Lambda, \gfamily_\Lambda)$.
\label{RigidityFailureExample}}
\end{center}
\end{figure}

The example in Figure \ref{RigidityFailureExample} shows that
non-isomorphic labeled-graphs may determine isomorphic graph
products. If one wishes to study a class of groups $\mathfrak{G}$,
and one hopes to take advantage of graph-product decompositions when
doing so, it is desirable to identify a canonical isomorphism class
of labeled-graphs $(\Gamma, \gfamily_\Gamma)$ for each isomorphism
class of groups $G \in \mathfrak{G}$.  Further, the choice of
labeled-graph should be sufficiently `natural' that one may look to
the structure of $(\Gamma, \gfamily_\Gamma)$ to explain the
structure of $G$.  One way in which a graph-product decomposition
may be considered natural is if it is minimal, in some sense, and if
any two such minimal decompositions are isomorphic.

%by studying an associated class of graph-product decompositions $\mathfrak{C}$,
%it is desirable to avoid redundancies in $\mathfrak{C}$.  For each isomorphism class of groups $G \in \mathfrak{G}$, one
%hopes to identify a canonical isomorphism class of labeled-graphs $(\Gamma, \gfamily_\Gamma) \in \mathfrak{C}$ that is sufficiently natural
%that one may look to the structure of $(\Gamma, \gfamily_\Gamma)$ to explain the structure of $G$.

Droms \cite{Droms} proved that if $G$ has a graph-product
decomposition with infinite cyclic vertex groups, then any two such
decompositions are isomorphic. Using different methods, Laurence
\cite[Isomorphism Theorem for Graph Groups, p.329]{Laurence} proved
a stronger statement that includes information about a particular
labeled-graph isomorphism.  Radcliffe \cite{Radcliffe} proved that
if $G$ has a graph-product decomposition with
directly-indecomposable finite vertex groups, then any two such
decompositions are isomorphic.  Under the additional hypothesis that
the vertex groups are cyclic, a close reading of Radcliffe's
argument yields a result analogous to that of Laurence (Theorem
\ref{PreciseRigidityForGPIFG}).  None of the work mentioned above
allowed both finite and infinite vertex groups.

The main result of the present article is that if $G$ has a
graph-product decomposition with directly-indecomposable cyclic
vertex groups, then any two such decompositions are isomorphic
(Theorem \ref{RigidityForGPICG}). It follows that if $G$ has a
graph-product decomposition with finitely-generated abelian vertex
groups and a graph which satisfies the $T_0$ property (Definition
\ref{T0PropertyDefn}), then any two such decompositions are
isomorphic (Corollary \ref{RigidityForT0}).  Thus if $G$ is a group
and $G$ has a graph-product decomposition with finitely-generated
abelian vertex groups, then $G$ has two canonical decompositions as
a graph product of groups: a unique decomposition in which each
vertex group is a directly-indecomposable cyclic group (such as
$(\Gamma, \gfamily_\Gamma)$ in Figure \ref{RigidityFailureExample}),
and a unique decomposition in which each vertex group is a
finitely-generated abelian group and the graph satisfies the $T_0$
property (such as $(\Sigma, \gfamily_\Sigma)$ in Figure
\ref{RigidityFailureExample}).  The first decomposition is minimal
in the sense that the vertex groups are minimal, the second
decomposition is minimal in the sense that the graph has the least
number of vertices when we allow only finitely-generated abelian
groups as vertex groups.

Our techniques are combinatorial.  Our arguments make essential use
of the results of Droms, Laurence and Radcliffe mentioned above. In
$\S$\ref{PrelimSection} we discuss some preliminary results on
graphs and graph products, in $\S$\ref{T0Section} we remind the
reader of the $T_0$ property and the $T_0$-quotient of a
labeled-graph as used by Radcliffe, in
$\S$\ref{KnownRigidityResultsSection} we state a result by Laurence
and sharpen a result by Radcliffe and in
$\S$\ref{MainTheoremSection} we prove the main result.

%\newpage
\section{Graphs and graph products}\label{PrelimSection}

%\subsection{Graphs and graph products}

For a non-trivial finite simplicial graph $\Lambda$ (that is, a
graph with no circuits of length less than three), we write
$\vertices_\Lambda$ for the set of vertices of $\Lambda$. The
subgraph of $\Lambda$ determined by a subset $U \subseteq
\vertices_\Lambda$ is the full subgraph of $\Gamma$ determined by
the vertices in $U$. We write ${\rm MCS}(\Lambda)$ for the set of
maximal complete subgraphs (or cliques) of $\Lambda$.

Let $(\Lambda, \gfamily_\Lambda)$ be a labeled-graph. Each full
subgraph $\Theta$ of $\Lambda$ determines a labeled-subgraph
$(\Theta, \gfamily_{\Theta})$.  We write $W(\Theta)$ for the
subgroup of $W(\Lambda, \gfamily_\Lambda)$ generated by the natural
image of the set ${\cup}_{u \in \vertices_\Theta} G_u,$ and we note
that $W(\Theta) \cong W(\Theta, \gfamily_\Theta)$.  In particular,
we shall often write $W(\Lambda)$ for $W(\Lambda,
\gfamily_\Lambda)$.

%\begin{lem}[Green, Theorem 3.26 \cite{GreenThesis}]\label{FiniteSubgroupsLemma}
%If $H$ is a subgroup of $W(\Lambda)$ and $H$ has finite order, then there exists a complete subgraph $\Theta$ of $\Lambda$
%and an element $w \in W(\Lambda)$ such that $H \subset w W(\Theta) w^{-1}.$
%\end{lem}

%\begin{cor}\label{FiniteSubgroupsCor}
%If $\gfamily_\Lambda$ is a family of directly-indecomposable cyclic groups and $H$ is a finite subgroup of $W(\Lambda)$,
%then there exists a complete subgraph $\Theta$ of $\Lambda$ and an element $w \in W(\Lambda)$ such that the following properties hold:
%\begin{enumerate}
%\item for each $u \in \vertices_\Theta$, $G_u$ has finite order;
%\item $H \subset w W(\Theta) w^{-1}.$
%\end{enumerate}
%\end{cor}

A cyclic group is \emph{primary} if it has prime-power order, and
\emph{directly-indecomposable} if it has infinite order or it is
primary. For the remainder of this section we assume that
$\gfamily_\Lambda$ is a family of directly-indecomposable cyclic
groups. Following common practice, we abuse notation by ignoring the
formal distinction between the elements of $\vertices_\Lambda$ and
the generators of $W(\Lambda)$.

We write $\Lambda_T$ for the subgraph of $\Lambda$ determined by the
vertices $u \in \vertices_\Lambda$ for which $G_u$ has finite order
and we write $\Lambda_A$ for the subgraph of $\Lambda$ determined by
the vertices $u \in \vertices_\Lambda$ for which $G_u$ has infinite
order (we have selected $T$ for torsion and $A$ for Artin).  The
next lemma follows immediately from a more general result by Green
\cite[Theorem 3.26]{GreenThesis}.

\begin{lem}\label{MaximalFiniteSubgroupsAndMCS}
If $\gfamily_\Lambda$ is a family of directly-indecomposable cyclic
groups, then each maximal finite subgroup of $W(\Lambda)$ is abelian
and
$$\{W(\Theta) \; | \; \Theta \in {\rm MCS}(\Lambda_T)\}$$ is a complete set of representatives for the
conjugacy classes of maximal finite subgroups of $W(\Lambda)$.
\end{lem}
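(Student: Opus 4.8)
The plan is to derive the lemma by specializing Green's general classification of finite subgroups of a graph product (Theorem 3.26 of \cite{GreenThesis}) to the case in which every vertex group is directly-indecomposable cyclic. First I would record precisely what Green's theorem yields in full generality: that every finite subgroup of $W(\Lambda)$ is conjugate into $W(\Theta)$ for some complete subgraph $\Theta$ of $\Lambda$, and that the conjugacy classes of maximal finite subgroups are obtained by choosing, over a maximal such complete subgraph, a maximal finite subgroup in each vertex group supported there. Stating this carefully makes the subsequent specialization transparent.

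Next I would exploit the hypothesis on $\gfamily_\Lambda$. The decisive observation is that a directly-indecomposable cyclic group is either infinite, and so contributes only the trivial finite subgroup, or primary, and so is finite and equal to its own unique maximal finite subgroup. Hence the only vertices that contribute a non-trivial finite subgroup are exactly those of $\Lambda_T$, and over such a vertex the maximal finite subgroup is all of $G_u$. Consequently the complete subgraphs arising in Green's description are precisely the complete subgraphs of $\Lambda_T$, and the maximal ones among these are precisely the members of ${\rm MCS}(\Lambda_T)$; the subgroup associated to $\Theta \in {\rm MCS}(\Lambda_T)$ is then $W(\Theta)$ itself.

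It then remains to read off the two assertions. For abelianness I would note that when $\Theta$ is complete every pair of vertex groups supported on $\Theta$ commutes by the defining relations of the graph product, so $W(\Theta) \cong \prod_{u \in \vertices_\Theta} G_u$ is a finite direct product of cyclic groups and hence finite abelian; since every maximal finite subgroup is conjugate to such a $W(\Theta)$, all maximal finite subgroups are abelian. For the classification, Green's theorem directly gives that $\{W(\Theta) \mid \Theta \in {\rm MCS}(\Lambda_T)\}$ is a complete and irredundant set of conjugacy-class representatives. The only point requiring care is the correspondence between maximality on the group side and maximality of the clique on the combinatorial side, namely that a finite subgroup supported on a clique of $\Lambda_T$ cannot be enlarged except by enlarging the clique; this is exactly where direct-indecomposability of the vertex groups does the work, and it is precisely the content supplied by Green's theorem. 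I therefore expect no genuine obstacle beyond faithfully transcribing Green's statement into the present notation.
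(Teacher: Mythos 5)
Your proposal is correct and takes essentially the same route as the paper, which offers no independent argument but simply observes that the lemma ``follows immediately from a more general result by Green [Theorem 3.26]''; your write-up just makes explicit the specialization that the paper leaves to the reader.
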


Following \cite{LaurenceThesis} \cite{Laurence}, a word $w$ in the
alphabet $\vertices_\Lambda^{\pm 1}$ is said to be \emph{reduced} if
there is no shorter word $w'$ which defines the same element of
$W(\Lambda)$. For a group element $g \in W(\Lambda)$ and a word $w$
in the alphabet $\vertices_\Lambda^{\pm 1}$, we write $g
\equalsreduced w$ if $w$ is a reduced word which equals $g$ in the
group $W(\Lambda)$. For words $w$ and $w'$ in the alphabet
$\vertices_\Lambda^{\pm 1}$, we write $w \equiv w'$ if $w$ and $w'$
are equal as words and we say that $w$ \emph{is transformed into
$w'$ by a letter swapping operation} if $w \equiv w_1 u^{\epsilon}
v^{\delta} w_2$ and $w' \equiv w_1 v^{\delta} u^{\epsilon} w_2$ for
some adjacent vertices $u, v \in \vertices_\Lambda$, some exponents
$\epsilon, \delta \in \{\pm 1\}$ and some reduced words $w_1, w_2$.
Laurence showed that for vertices $v_1, \dots, v_p \in
\vertices_\Lambda$ and integers $\epsilon_1, \dots, \epsilon_p \in
\{\pm 1\}$, if $w \equiv v_1^{\epsilon_1} \dots v_p^{\epsilon_p}$ is
not reduced, then there exists $1 \leq i < j \leq p$ such that $v_i
= v_j$, $\epsilon_i + \epsilon_j = 0$ and $v_i$ is adjacent to each
of the vertices $v_{i+1}, \dots, v_{j-1}$ (the \emph{Deletion
Condition}) \cite[Corollary 3.1.1]{LaurenceThesis}. It follows from
the Normal Form Theorem for Graph Products \cite{GreenThesis} (see
also \cite{LaurenceThesis}) that if two reduced words $w, w'$ define
the same element of $W(\Lambda)$, then $w$ can be transformed into
$w'$ by a finite number of letter swapping operations (the
\emph{Transpose Condition}). It follows from the Transpose Condition
that we may define
\begin{eqnarray*}
\support{g} & := & \{u \in \vertices_\Lambda \; | \; u \hbox{ or } u^{-1} \hbox{ appears in some reduced word for } g\} \\
\cyclicsupport{g} & := & \{u \in \vertices_\Lambda \; | \; \forall w
\in W(\Lambda) \;\; u \in \support{w g w^{-1}}\}.
\end{eqnarray*}
We say that $d$ is a \emph{terminal segment of $w$} if $d$ is a
reduced word and there exists a reduced word $w'$ such that $w
\equiv w'd$.

For an element $g \in W(\Lambda)$, we write $C_{W(\Lambda)}(g)$ for
the centralizer of $g$ in $W(\Lambda)$. An element $u \in
W(\Lambda)$ is said to be a CP element (for commuting product) if
there exists a complete subgraph $\Delta \subseteq \Gamma$ such that
$u \in W(\Delta)$. We write $\starofCPword{u}$ for the subgraph of
$\Gamma$ generated $\support{u}$ and those vertices adjacent to each
vertex in $\support{u}$. The centralizer of a CP element in
$W(\Lambda)$ has a particularly simple form.

\begin{lem}\label{CentralizerResult}
If $\gfamily_\Lambda$ is a family of directly-indecomposable cyclic
groups and $u$ is a CP element of $W(\Lambda)$, then
$C_{W(\Lambda)}(u) = W(\starofCPword{u}).$
\end{lem}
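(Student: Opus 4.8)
The plan is to prove $C_{W(\Lambda)}(u) = W(\starofCPword{u})$ by establishing the two inclusions separately, where $u$ is a CP element supported on a complete subgraph $\Delta$. The inclusion $W(\starofCPword{u}) \subseteq C_{W(\Lambda)}(u)$ is the routine direction: if $g \in W(\starofCPword{u})$, then $g$ is a word in generators each of which is either in $\support{u}$ or adjacent to every vertex of $\support{u}$. Since $\support{u} \subseteq \vertices_\Delta$ spans a complete subgraph, any two generators appearing in $u$ commute, and any generator of $\starofCPword{u}$ commutes with every generator appearing in $u$ (vertices of $\support{u}$ commute among themselves by completeness, and the remaining vertices of $\starofCPword{u}$ are adjacent to all of $\support{u}$ by definition). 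Hence $g$ commutes with $u$.

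\medskip

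The substantive direction is $C_{W(\Lambda)}(u) \subseteq W(\starofCPword{u})$. Here I would take an arbitrary $g$ with $gu = ug$ and fix a reduced word for $g$; the goal is to show every vertex $v \in \support{g}$ lies in $\starofCPword{u}$, i.e. $v \in \support{u}$ or $v$ is adjacent to every vertex of $\support{u}$. The natural tool is the combinatorial machinery just assembled: the Deletion Condition and the Transpose Condition for reduced words. Writing $u \equalsreduced \tilde{u}$ with $\tilde{u}$ a reduced word in the (pairwise-commuting) letters of $\Delta$, the equation $gu g^{-1} = u$ means the reduced word obtained from $g\,\tilde{u}\,g^{-1}$ reduces back to $\tilde{u}$. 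The key is to analyze the cancellation forced by this: each letter $v^\epsilon$ of $g$ that does \emph{not} commute with all of $u$ must either cancel against a letter of $u$ (forcing $v \in \support{u}$) or be blocked from cancelling, contradicting that the total reduces to $\tilde{u}$, which shares no letters with the surplus of $g$ and $g^{-1}$.

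\medskip

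More concretely, I would argue by a minimal-length / induction argument on a terminal segment of $g$. Suppose some $v \in \support{g}$ is neither in $\support{u}$ nor adjacent to all of $\support{u}$; pick such a $v$ and use the Transpose Condition to maneuver an occurrence of $v^{\pm 1}$ into an extremal position of a reduced word for $g$. Since $v$ fails to commute with some $w \in \support{u}$, that extremal occurrence of $v^{\pm 1}$ cannot be transposed past the block $\tilde{u}$, so in the product $g \tilde{u} g^{-1}$ it survives adjacent to $\tilde{u}$ on one side; for the whole product to reduce to $\tilde{u}$ (which contains no $v^{\pm 1}$ because $v \notin \support{u}$), that occurrence must cancel with the matching $v^{\mp 1}$ coming from $g^{-1}$ on the far side — but the Deletion Condition then requires $v$ to be adjacent to every intervening letter, in particular to every letter of $\tilde{u}$, hence to every vertex of $\support{u}$, a contradiction. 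This yields $v \in \support{u}$ or $v$ adjacent to all of $\support{u}$, so $g \in W(\starofCPword{u})$.

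\medskip

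The main obstacle I anticipate is making the ``cannot be transposed past $\tilde{u}$, therefore must cancel across it'' step fully rigorous: one must track carefully how the reduced form of $g \tilde{u} g^{-1}$ is computed via letter-swapping and deletions, ensuring that the offending letter of $g$ genuinely cannot be absorbed except by a partner from $g^{-1}$ whose path of cancellation passes through $\tilde{u}$. Controlling the interaction between cancellations internal to $g \tilde{u} g^{-1}$ and the adjacency constraints imposed by the Deletion Condition is the delicate bookkeeping here; the CP hypothesis (that $\support{u}$ is complete) is what keeps $\tilde{u}$ itself rigid enough that this bookkeeping closes.
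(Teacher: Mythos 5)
Your overall strategy (reduce the hard inclusion to showing $\support{g} \subseteq \starofCPword{u}$ and attack it with the Deletion and Transpose Conditions) is the right one, and the easy inclusion is handled exactly as in the paper. But the concrete plan for the hard inclusion has a genuine gap at the step ``pick such a $v$ and use the Transpose Condition to maneuver an occurrence of $v^{\pm 1}$ into an extremal position of a reduced word for $g$.'' The Transpose Condition only relates two reduced words representing the same element; it does not let you move a chosen letter to the end. A letter $v^{\epsilon}$ can be pushed to the terminal position only if it commutes with everything to its right in some reduced spelling, and a ``bad'' letter (one outside $\starofCPword{u}$) need not have this property --- it can be trapped in the interior of every reduced word for $g$ by letters it fails to commute with. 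So the contradiction you aim for cannot be launched from an arbitrary bad letter of an arbitrary centralizing element $g$.

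The paper closes exactly this gap with a minimality trick you did not use: take $g$ of minimal length in $C_{W(\Lambda)}(u) - W(\starofCPword{u})$. Minimality forces \emph{no terminal segment} of $g$ to lie in $W(\starofCPword{u})$ (otherwise you could strip it off, since $W(\starofCPword{u}) \subseteq C_{W(\Lambda)}(u)$, and obtain a shorter counterexample), so the last letter of any reduced word $g \equalsreduced w' y^{\epsilon}$ automatically satisfies $y \notin \Star{u}$ --- no maneuvering needed. The paper also avoids your second source of delicate bookkeeping (the reduction of $g\,\tilde u\,g^{-1}$, where letters of $g$ and $g^{-1}$ can interact across $\tilde u$) by working with the two-sided equation $w' y^{\epsilon} u = u\, w' y^{\epsilon}$: one first checks that $w' y^{\epsilon} u$ is reduced (via the Deletion Condition plus minimality again), notes that $u\, w' y^{\epsilon}$ is then reduced as well, and applies the Transpose Condition to these two reduced words; moving the letters of $u$ past $y^{\epsilon}$ forces $y$ to be adjacent to every vertex of $\support{u}$, contradicting $y \notin \Star{u}$. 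You would need to restructure your argument along these lines (or supply a genuinely different mechanism for isolating a bad letter at the boundary of the word) before the proof goes through.
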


\begin{proof}
Let $u$ be a CP element of $W(\Lambda)$. We shall abuse notation by
also writing $u$ for a reduced word representing the group element
$u$. It is clear that $W(\starofCPword{u}) \subset C_{W(\Lambda)}(u)
$.

Suppose that $C_{W(\Lambda)}(u) - W(\starofCPword{u})$ is non-empty
and let $g$ be a minimal length (with respect to the generating set
$\vertices_\Gamma^{\pm 1}$) element of $C_{W(\Lambda)}(u) -
W(\starofCPword{u})$.  The minimality of $g$ implies that no
terminal segment of $g$ is contained in $W(\starofCPword{u})$, so $g
\equalsreduced w' y^\epsilon$ for some vertex $y \not \in \Star{u}$,
some exponent $\epsilon \in \{\pm 1\}$ and some reduced word $w'$.

We claim that $w' y^\epsilon u$ is a reduced word.  Suppose that $w'
y^\epsilon u$ is not a reduced word.  It follows from the Deletion
Condition that some sequence of letter swapping operations will
transform the word $w' y^\epsilon$ into a word $w'' v^\delta$, where
$v \in \support{u}$, $\delta \in \{\pm 1\}$ and $v^{-\delta}$
appears in $u$. But then $g$ has a terminal segment in
$W(\starofCPword{u})$.  This contradiction completes the proof of
the claim.

By hypothesis, $w' y^\epsilon u = u  w' y^\epsilon$ and it follows
that $u w' y^\epsilon$ is a also reduced word. By the transpose
condition, $w' y^\epsilon u$ may be transformed to $u w' y^\epsilon$
by a finite number of letter swapping operations.  Since $y^\epsilon
\not \in \Star{u}$, we have that $y \not \in \support{u}$.  It
follows that to transform $w' y^\epsilon u$ to $u w' y^\epsilon$ by
letter swapping operations, we must have that $y$ is adjacent to
each vertex in $\support{u}$.  But this contradicts the hypothesis
that $y \not \in \Star{u}$.
\end{proof}

%\newpage
\section{An equivalence relation on the vertices of a graph}\label{T0Section}

In this section we remind the reader of an equivalence relation on
the vertices of a graph which proved to be a key idea in
\cite{Radcliffe}.

Let $(\Lambda, \gfamily_\Lambda)$ be a labeled-graph. Following
\cite{Radcliffe}, we define a relation $\sim_\Lambda$ on the set
$\vertices_\Lambda$ as follows:
 $$u  \sim_\Lambda v \Leftrightarrow
(\forall \Theta \in {\rm MCS}(\Lambda) \;\;  u \in \Theta
\Leftrightarrow v \in \Theta).$$ It is easily verified that
$\sim_\Lambda$ is an equivalence relation.  We write $\tilde{u}$ for
the $\sim_\Lambda$ equivalence class of $u \in \vertices_\Lambda$.
It is immediate from the definitions that each $\sim_\Lambda$
equivalence class determines a complete subgraph of $\Lambda$.

\begin{defn}\label{T0PropertyDefn}
If each $\sim_\Lambda$ equivalence class is a singleton set, then we
say that $\Lambda$ \emph{satisfies the $T_0$ property}.
\end{defn}

%The following lemma gives a useful characterization of the $T_0$
%property.

%\begin{lem}\label{PartitionsOfMCS}
%A graph $\Lambda$ satisfies the $T_0$ property if and only if for
%each partition of ${\rm MCS}(\Lambda)$ into disjoint sets $A$ and
%$B$, the set
%$$\underset{\Theta \in A}{\cap} \Theta - \underset{\Delta \in B}{\cup} \Delta$$
%has cardinality at most one.
%\end{lem}

%The relation $\sim_\Lambda$ respects the adjacency relation between
%vertices in the following sense: if $v \neq u_1$ and $v \neq u_2$
%and $u_1 \sim_\Lambda u_2$, then $v$ and $u_1$ are adjacent if and
%only if $v$ and $u_2$ and adjacent. It follows that $\sim_\Lambda$
%determines a well-defined quotient of $\Lambda$ and a well-defined
%quotient of $(\Lambda, \gfamily_\Lambda)$.

The relation $\sim_\Lambda$ determines a well-defined quotient of
$\Lambda$ and a well-defined quotient of $(\Lambda,
\gfamily_\Lambda)$.

\begin{defn}\label{T0GraphDefn}
Let $\Lambda_0$ denote the graph with vertex set
$\vertices_{\Lambda_0}$ in one-to-one correspondence with the
$\sim_\Lambda$ equivalence classes of $\vertices_\Lambda$ and with
adjacency determined by the following rule:
$$\tilde{u} \hbox{ and } \tilde{v} \hbox{ adjacent in } \Lambda_0 \Leftrightarrow u \hbox{ and } v \hbox{ adjacent in } \Lambda \hbox{ and } u \not \sim_\Lambda v.$$
For each $\tilde{u} \in \vertices_{\Lambda_0}$, define
$G_{\tilde{u}} := \times_{v \in \tilde{u}} \; G_v$. Write
$\gfamily_{\Lambda_0} := \{G_{\tilde{u}} \}_{\tilde{u} \in
\vertices_{\Lambda_0}}.$ The graph $\Lambda_0$ is called the
\emph{$T_0$-quotient of} $\Lambda$ and the labeled-graph
$(\Lambda_0, \gfamily_{\Lambda_0})$ is called the
\emph{$T_0$-quotient of} $(\Lambda, \gfamily_\Lambda)$.
\end{defn}

We record some observations on the $T_0$-quotients $\Lambda_0$ and
$(\Lambda_0, \gfamily_{\Lambda_0})$

\begin{lem}\label{T0Lemma}
Let $(\Lambda, \gfamily_\Lambda)$ be a labeled-graph and let
$(\Lambda_0, \gfamily_{\Lambda_0})$ be its $T_0$-quotient. The
following properties hold:
\begin{enumerate}
\item \label{T0QuotientIsT0} $\Lambda_0$ satisfies the $T_0$ property;
\item \label{AdjacenyInT0} vertices $u, v \in \vertices_\Lambda$ are adjacent in $\Lambda$ if and
only if either $\tilde{u}$ and $\tilde{v}$ are adjacent in
$\Lambda_0$ or $\tilde{u} = \tilde{v}$;
\item \label{T0QuotientIsomorphic} $W(\Lambda, \gfamily_\Lambda) \cong W(\Lambda_0, \gfamily_{\Lambda_0})$.
\end{enumerate}
\end{lem}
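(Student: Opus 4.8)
The plan is to prove the three parts of Lemma~\ref{T0Lemma} in order, as parts (\ref{T0QuotientIsT0}) and (\ref{AdjacenyInT0}) are essentially combinatorial bookkeeping about the quotient construction, while part (\ref{T0QuotientIsomorphic}) is the substantive group-theoretic claim.

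For part (\ref{AdjacenyInT0}), I would argue directly from Definition~\ref{T0GraphDefn}. If $u$ and $v$ are adjacent in $\Lambda$, then either $u \sim_\Lambda v$, in which case $\tilde{u} = \tilde{v}$, or $u \not\sim_\Lambda v$, in which case the adjacency rule for $\Lambda_0$ gives that $\tilde u$ and $\tilde v$ are adjacent. Conversely, if $\tilde u$ and $\tilde v$ are adjacent in $\Lambda_0$, the same rule forces $u$ and $v$ adjacent in $\Lambda$; and if $\tilde u = \tilde v$ then $u \sim_\Lambda v$, so $u$ and $v$ lie in a common $\sim_\Lambda$ class, which is a complete subgraph of $\Lambda$ (as noted just before Definition~\ref{T0PropertyDefn}), hence $u$ and $v$ are adjacent (when $u \neq v$; the case $u = v$ is vacuous or handled by convention).

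For part (\ref{T0QuotientIsT0}), I would show that $\sim_{\Lambda_0}$ is trivial. The key is to relate $\mathrm{MCS}(\Lambda_0)$ to $\mathrm{MCS}(\Lambda)$: using part (\ref{AdjacenyInT0}), a subset of $\vertices_{\Lambda_0}$ spans a complete subgraph exactly when the union of the corresponding $\sim_\Lambda$-classes spans a complete subgraph of $\Lambda$, and this correspondence carries maximal cliques to maximal cliques. Granting this, if $\tilde u \sim_{\Lambda_0} \tilde v$, then $\tilde u$ and $\tilde v$ belong to exactly the same cliques of $\Lambda_0$, which pulls back to $u$ and $v$ belonging to exactly the same cliques of $\Lambda$, giving $u \sim_\Lambda v$ and hence $\tilde u = \tilde v$. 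Thus every $\sim_{\Lambda_0}$ class is a singleton.

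Part (\ref{T0QuotientIsomorphic}) is where I expect the main work. I would define a homomorphism $\phi\co W(\Lambda_0, \gfamily_{\Lambda_0}) \to W(\Lambda, \gfamily_\Lambda)$ by sending each vertex group $G_{\tilde u} = \times_{v \in \tilde u} G_v$ to the subgroup of $W(\Lambda)$ generated by $\{G_v\}_{v \in \tilde u}$. This is well-defined because each $\sim_\Lambda$ class spans a complete subgraph, so the generators of $G_{\tilde u}$ pairwise commute in $W(\Lambda)$ and generate a group isomorphic to the direct product, and because the defining commutation relations of $W(\Lambda_0)$ are respected: if $\tilde u, \tilde v$ are adjacent in $\Lambda_0$, part (\ref{AdjacenyInT0}) shows every vertex of $\tilde u$ is adjacent in $\Lambda$ to every vertex of $\tilde v$, so the images commute. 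Conversely the inclusions $G_v \hookrightarrow G_{\tilde v}$ assemble into an inverse homomorphism $\psi\co W(\Lambda) \to W(\Lambda_0)$, again using part (\ref{AdjacenyInT0}) to check that adjacency in $\Lambda$ is preserved (adjacent vertices either map into the same vertex group $G_{\tilde u}$, where they commute by definition of the direct product, or into adjacent vertex groups of $\Lambda_0$). Checking $\phi \circ \psi$ and $\psi \circ \phi$ are the identity on generators then gives the isomorphism. The main obstacle is verifying well-definedness of $\phi$ rigorously, i.e.\ confirming that the universal property of the graph product $W(\Lambda_0)$ applies---that the images of the $G_{\tilde u}$ satisfy exactly the required relations and no more---which is precisely where the adjacency dictionary of part (\ref{AdjacenyInT0}) and the completeness of $\sim_\Lambda$-classes must both be invoked.
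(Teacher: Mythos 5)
Your proposal is correct. The paper offers no proof of this lemma at all (it is merely ``recorded'' as a set of observations following Definition~\ref{T0GraphDefn}), and your argument --- the adjacency dictionary from the definition, the induced bijection between $\mathrm{MCS}(\Lambda)$ and $\mathrm{MCS}(\Lambda_0)$ via unions of $\sim_\Lambda$-classes, and the mutually inverse homomorphisms built from the universal property of the graph product --- is exactly the routine verification the authors had in mind.
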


We now establish that in the case that $\gfamily_\Lambda$ is a
family of directly-indecomposable cyclic groups, the isomorphism
class of $(\Lambda, \gfamily_{\Lambda})$ is uniquely determined by
the isomorphism class of the $T_0$-quotient $(\Lambda_0,
\gfamily_{\Lambda_0})$.

\begin{lem}\label{IsomorphismAndT0Isomorphism}
Let $(\Lambda, \gfamily_\Lambda)$ and $(\Xi, \gfamily_\Xi)$ be
labeled-graphs with directly-indecomposable cyclic vertex groups.
Then $(\Lambda, \gfamily_\Lambda)$ and $(\Xi, \gfamily_\Xi)$ are
isomorphic if and only if $(\Lambda_0, \gfamily_{\Lambda_0})$ and
$(\Xi_0, \gfamily_{\Xi_0})$ are isomorphic.
\end{lem}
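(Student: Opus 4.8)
The plan is to prove the two implications separately, with the forward direction being routine and the reverse direction resting on the uniqueness of direct decompositions of finitely-generated abelian groups.

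For the forward direction, I would suppose $f \co (\Lambda, \gfamily_\Lambda) \to (\Xi, \gfamily_\Xi)$ is a labeled-graph isomorphism. Since $f$ preserves adjacency in both directions, it carries $\mathrm{MCS}(\Lambda)$ bijectively onto $\mathrm{MCS}(\Xi)$, and it then follows immediately from the definition of $\sim$ that $u \sim_\Lambda v$ if and only if $f(u) \sim_\Xi f(v)$. Thus $f$ descends to a bijection $\bar f \co \vertices_{\Lambda_0} \to \vertices_{\Xi_0}$ given by $\bar f(\tilde u) = \widetilde{f(u)}$. Using Definition \ref{T0GraphDefn}, one checks that $\bar f$ preserves adjacency in $\Lambda_0$ (as $f$ preserves both adjacency and the relation $\sim$) and that $G_{\tilde u} = \times_{v \in \tilde u} G_v \cong \times_{v \in \tilde u} G_{f(v)} = G_{\bar f(\tilde u)}$, since $f$ restricts to a bijection $\tilde u \to f(\tilde u)$ pairing each vertex group with an isomorphic one. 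Hence $\bar f$ is a labeled-graph isomorphism of the $T_0$-quotients.

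For the reverse direction, I would suppose $g \co (\Lambda_0, \gfamily_{\Lambda_0}) \to (\Xi_0, \gfamily_{\Xi_0})$ is a labeled-graph isomorphism. For each class $\tilde u \in \vertices_{\Lambda_0}$ there is then an isomorphism $G_{\tilde u} \cong G_{g(\tilde u)}$ of finitely-generated abelian groups, each presented as a direct product of directly-indecomposable cyclic groups (namely the vertex groups indexed by the members of the corresponding equivalence class). By the uniqueness part of the fundamental theorem of finitely-generated abelian groups, the multiset of directly-indecomposable cyclic factors is an isomorphism invariant; hence $\abs{\tilde u} = \abs{g(\tilde u)}$ and there is a bijection $f_{\tilde u} \co \tilde u \to g(\tilde u)$ with $G_v \cong G_{f_{\tilde u}(v)}$ for every $v \in \tilde u$. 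Assembling these over all classes yields a bijection $f \co \vertices_\Lambda \to \vertices_\Xi$ which matches isomorphism types of vertex groups by construction. To verify that $f$ preserves adjacency I would invoke Lemma \ref{T0Lemma}(\ref{AdjacenyInT0}): vertices $u$ and $v$ are adjacent in $\Lambda$ precisely when either $\tilde u = \tilde v$ or $\tilde u$ and $\tilde v$ are adjacent in $\Lambda_0$, and similarly in $\Xi$. Since $\widetilde{f(u)} = g(\tilde u)$ and $\widetilde{f(v)} = g(\tilde v)$, and since $g$ preserves both equality of classes and adjacency in the quotient, these conditions are equivalent for $u, v$ in $\Lambda$ and for $f(u), f(v)$ in $\Xi$; thus $f$ is a labeled-graph isomorphism.

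The main obstacle is the step matching the directly-indecomposable cyclic factors of $G_{\tilde u}$ with those of $G_{g(\tilde u)}$: this is precisely where the hypothesis that the vertex groups are directly-indecomposable cyclic is essential, since it is the uniqueness of the primary-and-free decomposition that allows a group isomorphism $G_{\tilde u} \cong G_{g(\tilde u)}$ to be refined into a bijection between the individual vertices of the two equivalence classes respecting vertex-group isomorphism types. Once this refinement is available, everything else is bookkeeping driven by Lemma \ref{T0Lemma}.
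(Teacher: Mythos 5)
Your proposal is correct and follows essentially the same route as the paper: the forward direction is the routine descent to the quotient, and the reverse direction uses the uniqueness of the decomposition of a finitely-generated abelian group into directly-indecomposable cyclic factors to refine the quotient isomorphism to a vertex-level bijection, with adjacency checked via Lemma \ref{T0Lemma}(\ref{AdjacenyInT0}). Your write-up simply makes explicit some steps the paper leaves as ``clear.''
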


\begin{proof}
It is clear that each isomorphism $(\Lambda, \gfamily_\Lambda) \to
(\Xi, \gfamily_\Xi)$ induces an isomorphism $(\Lambda_0,
\gfamily_{\Lambda_0}) \to (\Xi_0, \gfamily_{\Xi_0})$.  Now suppose
that there exists an isomorphism $t_0\co  (\Lambda_0,
\gfamily_{\Lambda_0}) \to (\Xi_0, \gfamily_{\Xi_0})$.  Each element
of $\gfamily_{\Lambda_0}$ (resp. $\gfamily_{\Xi_0}$) is a
finitely-generated abelian group. It is well-known that such groups
have a unique decomposition as a direct product of
directly-indecomposable cyclic groups.  Thus, for each $\tilde{u}
\in \vertices_{\Lambda_0}$, there exists a labeled-graph isomorphism
$t_{\tilde{u}}$ from the subgraph of $\Lambda$ generated by the
vertices in $\tilde{u}$ to the subgraph of $\Xi$ generated by the
vertices in $t_0(\tilde{u})$. Define $t$ to be the bijection
$\vertices_\Lambda \to \vertices_\Xi$ which restricts to
$t_{\tilde{u}}$ for each equivalence class $\tilde{u} \in
\vertices_{\Lambda_0}$. It follows from the definitions and Lemma
\ref{T0Lemma}(\ref{AdjacenyInT0}) that $t$ is a labeled-graph
isomorphism.
\end{proof}

\section{Some preliminary rigidity results}\label{KnownRigidityResultsSection}

For the remainder of this paper we assume that $(\Gamma,
\gfamily_\Gamma)$ and $(\Sigma, \gfamily_\Sigma)$ are labeled-graphs
with directly-indecomposable cyclic vertex groups.  As in
$\S$\ref{PrelimSection}, we shall abuse notation by ignoring the
formal distinction between the elements of $\vertices_\Gamma$ (resp.
$\vertices_\Sigma$) and the generators of $W(\Gamma)$ (resp.
$W(\Sigma)$).

As stated in the introduction, Droms \cite{Droms} proved that if $G$
has a graph-product decomposition with infinite cyclic vertex
groups, then any two such decompositions are isomorphic.  Laurence
proved the following stronger statement using peak reduction
techniques.

\begin{thm}[Laurence, p.329 \cite{Laurence}]\label{PreciseRigidityForRAAG}
If $\gfamily_\Gamma$ and $\gfamily_\Sigma$ are families of infinite
cyclic groups and $\alpha\co  W(\Gamma) \to W(\Sigma)$ is an
isomorphism, then there exists a labeled-graph isomorphism $a\co
(\Gamma, \gfamily_\Gamma) \to (\Sigma, \gfamily_\Sigma)$ for which
the following property holds:
$$\forall u \in \vertices_\Gamma \;\; a(u) \in \cyclicsupport{\alpha(u)}.$$
\end{thm}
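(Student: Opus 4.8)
The plan is to follow Laurence's peak-reduction strategy. Since $\alpha$ witnesses $W(\Gamma) \cong W(\Sigma)$, an application of Droms' rigidity theorem \cite{Droms} shows $\Gamma \cong \Sigma$; after fixing such a graph isomorphism and relabeling, I would reduce to the case $\Sigma = \Gamma$ and $\alpha \in \operatorname{Aut}(W(\Gamma))$, where the goal becomes producing a graph automorphism $a$ with $a(u) \in \cyclicsupport{\alpha(u)}$ for every $u \in \vertices_\Gamma$. The central input is Laurence's theorem \cite{Laurence} that $\operatorname{Aut}(W(\Gamma))$ is generated by four families of elementary automorphisms: inversions $u \mapsto u^{-1}$, graph automorphisms $u \mapsto \phi(u)$, partial conjugations (conjugating the generators lying in one connected component of $\Gamma$ minus the star of $v$ by the vertex $v$), and dominated transvections $u \mapsto uv$, permitted exactly when $v$ dominates $u$ in the sense that $\mathrm{lk}(u) \subseteq \mathrm{st}(v)$.

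The idea is to write $\alpha = \alpha_n \cdots \alpha_1$ as a product of such generators and to argue by induction on $n$, carrying along a candidate graph automorphism $a_k$ attached to $\alpha_k \cdots \alpha_1$ satisfying $a_k(u) \in \cyclicsupport{(\alpha_k \cdots \alpha_1)(u)}$ for all $u$. The two structural facts I would exploit are that $\cyclicsupport{\cdot}$ is invariant under conjugation (so inner and partial-conjugation moves leave cyclic supports of individual generator-images intrinsically unchanged), and that the abelianized exponent-sum support of any $g$ is always contained in $\cyclicsupport{g}$, which pins down preferred vertices that cannot be eliminated. Inversions and partial conjugations then fix the relevant cyclic supports; a graph-automorphism factor relabels and is absorbed by post-composing $a_k$ with $\phi$; and a transvection $u \mapsto uv$ alters $\alpha(u)$ only by right multiplication by $v$, whose effect on $\support{\cdot}$ and $\cyclicsupport{\cdot}$ is tightly constrained by the domination hypothesis $\mathrm{lk}(u) \subseteq \mathrm{st}(v)$. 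In each case I would check that the distinguished vertex $a_k(u)$ persists in the cyclic support of the new generator-image, so that in the end $a := a_n$ is a graph automorphism with the desired property.

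The main obstacle is the transvection step together with the global coherence of the assignment. Verifying $a_k(u) \in \cyclicsupport{\alpha(u)v}$ is subtle because right multiplication by $v$ can both enlarge $\support{\alpha(u)}$ and shift which vertices survive in every conjugate; one must confirm, using the Deletion and Transpose Conditions, that no vertex carrying $a_k(u)$'s contribution is cancelled. More seriously, bijectivity and adjacency-preservation of $a$ are \emph{not} consequences of the vertex-by-vertex containments and instead require the full peak-reduction argument: one controls the complexity $\sum_{u \in \vertices_\Gamma} \abs{\alpha(u)}$ and shows it can be driven to its minimum through elementary moves that never pass through a length-increasing ``peak,'' so that the factorization may be chosen compatibly with a single graph automorphism. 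Ruling out obstructing peak configurations is the technical heart of the argument, and it is precisely here that Laurence's peak-reduction lemma is indispensable.
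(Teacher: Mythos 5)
The paper does not prove this statement at all: Theorem \ref{PreciseRigidityForRAAG} is imported verbatim from Laurence's paper (hence the attribution ``Laurence, p.329'' in the theorem header), and the only remark the authors make about its proof is that Laurence obtained it ``using peak reduction techniques.'' Your proposal correctly identifies both the provenance and the method, so you have reconstructed an accurate picture of where the result comes from; if the intent were merely to justify the theorem by citation, as the paper does, your first sentence would suffice.

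As a self-contained proof, however, what you have written is a plan rather than an argument, and the plan leans on exactly the machinery it would need to establish. The generation of $\operatorname{Aut}(W(\Gamma))$ by inversions, graph automorphisms, partial conjugations and dominated transvections is itself the main theorem of the very paper \cite{Laurence} being cited, and the ``peak-reduction lemma'' you invoke at the end is the engine of that same paper; assuming these and then deducing Theorem \ref{PreciseRigidityForRAAG} is not materially different from citing the theorem outright, and one must also check that Laurence's logical development does not derive the generating-set theorem \emph{from} the isomorphism theorem, which would make your route circular. Beyond that, two steps you treat as routine are not: a partial conjugation applied to a general group element is not an inner automorphism, so it does not obviously preserve $\cyclicsupport{\cdot}$ of the generator-images (only honest conjugation does); and, as you concede yourself, the pointwise containments $a(u) \in \cyclicsupport{\alpha(u)}$ do not by themselves produce a bijection that preserves adjacency --- that coherence is the actual content of the theorem. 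Since you explicitly defer both the transvection analysis and the peak-reduction argument that resolves the coherence problem, the proposal has a genuine gap between outline and proof.
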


%No originality is claimed for the results in this subsection.  They
%result from a close reading of Radcliffe's arguments
%\cite{Radcliffe} in case each vertex group is a finite abelian
%group. We present the arguments below, rather than referring the
%reader to the original, because we wish to carefully justify those
%parts of the statements below that are more explicit than the
%statements in \cite{Radcliffe}.

%Throughout this section we assume that $\gfamily_{\Gamma_T}$ and
%$\gfamily_\Sigma_T$ are families of directly-indecomposable {\bf
%finite} cyclic groups.  Let $\Gamma_0_T$ and $\Sigma_0_T$ denote the
%$T_0$ quotients of $\Gamma_T$ and $\Sigma_T$ respectively, and let
%$\gfamily_{\Gamma_0_T}$ and $\gfamily_{\Sigma_0_T}$ denote the
%corresponding families of finite abelian groups. It is
%important to note that $\Gamma_0_T$ is not necessary a subgraph of
%$\Gamma_0$.

Also, as stated in the introduction, Radcliffe \cite{Radcliffe}
proved that if $G$ has a graph-product decomposition with
directly-indecomposable finite vertex groups, then any two such
decompositions are isomorphic.  In this section we work towards a
full analogue of Theorem \ref{PreciseRigidityForRAAG}, under the
hypothesis of primary cyclic vertex groups.
%We first introduce some notation and assemble two helpful lemmas.

By Lemma \ref{MaximalFiniteSubgroupsAndMCS}, if $\gfamily_\Lambda$
is a family of directly-indecomposable cyclic groups and $g \in
W(\Lambda)$ is an element of finite order, then there exists a
unique element $\cycred{g}$ of minimal length in the conjugacy class
of $g$ and there exists $\Theta \in {\rm MCS}(\Lambda_T)$ such that
$\cyclicsupport{g} \subseteq \Theta$ and $\cycred{g} \in W(\Theta)$.
This fact plays a central role in the present article because of the
following lemma.

\begin{lem}\label{SimplyfyingIsoOfT}
If $\gfamily_\Gamma$ and $\gfamily_\Sigma$ are families of primary
cyclic groups and $\tau\co  W(\Gamma) \to W(\Sigma)$ is an
isomorphism, then the map $\hat{\tau}\co \vertices_\Gamma \to
W(\Sigma)$ determined by the rule $u \mapsto [\tau(u)]$ extends to
an isomorphism $\hat{\tau}\co  W(\Gamma) \to W(\Sigma)$.
\end{lem}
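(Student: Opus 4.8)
The plan is to verify that the assignment $u \mapsto \cycred{\tau(u)}$ respects the defining relations of $W(\Gamma)$, and then to show that the resulting homomorphism is bijective. Since $\gfamily_\Gamma$ and $\gfamily_\Sigma$ consist of primary (hence finite) cyclic groups, we have $\Gamma = \Gamma_T$ and $\Sigma = \Sigma_T$, so the fact recorded before the lemma applies to the image of every vertex: for each $u$ the element $\cycred{\tau(u)}$ is the unique shortest representative of the conjugacy class of $\tau(u)$, it is a CP element, and $\cycred{\tau(u)} \in W(\Theta)$ for some $\Theta \in {\rm MCS}(\Sigma)$. Now $W(\Gamma)$ is presented by the relations internal to each $G_u$ together with the commuting relations $uv = vu$ for adjacent $u,v$. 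The internal relations are preserved because $\cycred{\tau(u)}$ is conjugate to $\tau(u)$ and hence has the same order as $\tau(u)$, namely the order of $u$; thus $u \mapsto \cycred{\tau(u)}$ restricts to a homomorphism on each $G_u$.

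The essential point is to show that $\cycred{\tau(u)}$ and $\cycred{\tau(v)}$ commute whenever $u$ and $v$ are adjacent. I would first record the auxiliary claim that if $\Theta$ is a complete subgraph of $\Sigma$ and $x \in W(\Theta)$, then $x$ is already of minimal length in its conjugacy class, so $\cycred{x} = x$. This follows from the Deletion Condition: writing $x$ as a reduced word supported on the clique $\support{x} \subseteq \Theta$, conjugation by a generator $z^{\pm 1}$ either fixes $x$ (when $z$ is adjacent to every vertex of $\support{x}$) or yields a reduced word of strictly greater length (when $z$ fails to commute with some letter of $x$, so that no deletion is available), and hence no conjugation shortens $x$. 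Granting this, suppose $u,v$ are adjacent, so that $\tau(u)$ and $\tau(v)$ commute. Then $\langle \tau(u), \tau(v)\rangle$ is a finite abelian subgroup of $W(\Sigma)$, so by Lemma \ref{MaximalFiniteSubgroupsAndMCS} it lies in a conjugate $g W(\Theta) g^{-1}$ with $\Theta \in {\rm MCS}(\Sigma)$. The elements $g^{-1}\tau(u)g$ and $g^{-1}\tau(v)g$ then lie in $W(\Theta)$; by the auxiliary claim they equal their own cyclic reductions, and by uniqueness of the shortest conjugacy representative they equal $\cycred{\tau(u)}$ and $\cycred{\tau(v)}$ respectively. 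Since $W(\Theta)$ is abelian, $\cycred{\tau(u)}$ and $\cycred{\tau(v)}$ commute, as required. This establishes that $\hat{\tau}$ extends to a homomorphism $W(\Gamma) \to W(\Sigma)$.

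For surjectivity I would argue clique by clique. Fix $\Theta \in {\rm MCS}(\Gamma)$; then $W(\Theta)$ is a maximal finite subgroup of $W(\Gamma)$, so $\tau(W(\Theta))$ is a maximal finite subgroup of $W(\Sigma)$ and, by Lemma \ref{MaximalFiniteSubgroupsAndMCS}, equals $c W(\Phi) c^{-1}$ for a unique $\Phi \in {\rm MCS}(\Sigma)$ and some $c \in W(\Sigma)$. For each $u \in \Theta$ we have $c^{-1}\tau(u)c \in W(\Phi)$, which is cyclically reduced by the auxiliary claim, so $\hat{\tau}(u) = \cycred{\tau(u)} = c^{-1}\tau(u)c$. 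Thus $\hat{\tau}$ agrees on $W(\Theta)$ with the injective map $x \mapsto c^{-1}\tau(x)c$ and carries $W(\Theta)$ isomorphically onto $W(\Phi)$. Running this over all $\Theta \in {\rm MCS}(\Gamma)$, and using $\tau^{-1}$ to see that every $\Phi \in {\rm MCS}(\Sigma)$ arises this way, the image of $\hat{\tau}$ contains $W(\Phi)$ for every maximal clique of $\Sigma$, hence every vertex of $\Sigma$, hence all of $W(\Sigma)$.

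Finally, for injectivity I would apply the same construction to $\tau^{-1}$ to obtain a surjective homomorphism $\widehat{\tau^{-1}}\co W(\Sigma) \to W(\Gamma)$, so that $\widehat{\tau^{-1}} \circ \hat{\tau}$ is a surjective endomorphism of $W(\Gamma)$. Since $W(\Gamma)$ is a finitely-generated graph product of finite groups it is residually finite (Green \cite{GreenThesis}) and therefore Hopfian, so this surjective endomorphism is an automorphism; hence $\hat{\tau}$ is injective and therefore an isomorphism. The main obstacle is the commuting relation of the second paragraph: the content is that cyclic reduction cannot destroy commutativity, and the key device is to conjugate the two commuting images \emph{simultaneously} into a single clique subgroup, where the auxiliary ``complete subgraphs are cyclically reduced'' claim makes the cyclic reductions visible and manifestly commuting.
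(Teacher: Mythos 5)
Your proposal is correct, and the first half (extending $\hat{\tau}$ to a homomorphism) is essentially the paper's argument: both preserve the internal relations because conjugation preserves order, and both handle the commuting relations by conjugating $\tau(u)$ and $\tau(v)$ \emph{simultaneously} into a single $W(\Theta)$, $\Theta \in \mathrm{MCS}(\Sigma)$, via Lemma \ref{MaximalFiniteSubgroupsAndMCS}; your auxiliary claim that elements of clique subgroups are already of minimal length in their conjugacy class is used silently by the paper at the same point, so making it explicit is a small improvement (though your one-line justification that ``no conjugation shortens $x$'' really needs the standard induction on the length of the conjugator, since a length-increasing conjugation could in principle be followed by a length-decreasing one). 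Where you genuinely diverge is in proving bijectivity. The paper does this in one stroke by computing $\widehat{\tau^{-1}} \circ \hat{\tau}(v) = v$ on each generator $v$: writing $\tau(v) = w a w^{-1}$ with $a = \hat{\tau}(v) \in W(\Theta)$, the element $\tau^{-1}(a)$ is conjugate to $v$, whose unique minimal conjugacy representative is $v$ itself, so the two extensions are mutually inverse. You instead prove surjectivity clique-by-clique (showing $\hat{\tau}$ carries each $W(\Theta)$ onto some $W(\Phi)$ and that every $\Phi \in \mathrm{MCS}(\Sigma)$ is hit) and then extract injectivity from the Hopf property via Green's residual finiteness theorem. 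Both routes work; the paper's is more elementary and self-contained, while yours imports a nontrivial external fact (residual finiteness of graph products of finite groups) that the paper never needs. Note also that your own clique-by-clique computation $\hat{\tau}(u) = c^{-1}\tau(u)c$ for $u \in \Theta$ already puts the paper's direct verification of $\widehat{\tau^{-1}} \circ \hat{\tau} = \mathrm{id}$ within reach, so the appeal to Hopficity could be dispensed with at no extra cost.
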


\begin{proof}
Consider the presentation of $W(\Gamma)$ implicit in the graph
product decomposition $(\Gamma, \gfamily_\Gamma)$.  We shall show
that $\hat{\tau}$ extends to a homomorphism $W(\Gamma) \to
W(\Sigma)$ by checking that the relations in this presentation of
$W(\Gamma)$ are `preserved' by $\hat{\tau}$. Since $\tau$ is an
isomorphism and conjugation preserves the order of an element, it is
clear that the order of each vertex is preserved by $\hat{\tau}$. If
$u, v \in \vertices_\Gamma$ are adjacent in $\Gamma$, then $\langle
u, v \rangle$ has finite order and so does $\langle \tau(u), \tau(v)
\rangle$. By Lemma \ref{MaximalFiniteSubgroupsAndMCS}, there exists
$\Theta \in {\rm MCS}(\Sigma)$ and $w \in W(\Sigma)$ such that
$\tau(u), \tau(v) \in w W(\Theta) w^{-1}$. Then
$\hat{\tau}(u),\hat{\tau}(v)$ are contained in the abelian subgroup
$W(\Theta)$. Thus $\hat{\tau}(u) \hat{\tau}(v)\hat{\tau}(u)^{-1}
\hat{\tau}(v)^{-1} = 1$ and the relation $uvu^{-1} v^{-1}$ is
preserved by $\hat{\tau}$.

We shall show that the homomorphism $\hat{\tau}$ is an isomorphism
by showing that $\widehat{\tau^{-1}} \circ \hat{\tau}(v) = v$ for
each $v \in \vertices_\Gamma$.  Let $v \in \vertices_\Gamma$.  There
exists $\Theta \in {\rm MCS}(\Sigma)$, $a \in W(\Theta)$ and $w \in
W(\Sigma)$ such that $\tau(v) = w a w^{-1}$ and $\hat{\tau}(v) = a$.
Then $\tau^{-1}(a) = \tau^{-1}(w^{-1}) v \tau^{-1}(w)$ and
$\widehat{\tau^{-1}}(a) = v$.  So $\widehat{\tau^{-1}} \circ
\hat{\tau}(v) = \widehat{\tau^{-1}}(a) = v$, as required.
\end{proof}

%\begin{proof}
%We first establish that $\hat{\tau}$ extends to a homomorphism
%$W(\Gamma) \to W(\Sigma)$. Let $u \in \vertices_\Gamma$.  Since the
%order of an element is preserved under an isomorphism and under
%conjugation, the order of $\hat{\tau}(u)$ equals that of $u$.  Let
%$u, v \in \vertices_\Gamma$ be adjacent vertices.  There exists an
%abelian special subgroup of
%\end{proof}

We also require the following result concerning the isomorphisms of
finite abelian groups.

\begin{lem}\label{AbelianGroupLemma}
If $\Gamma$ and $\Sigma$ are complete graphs and $\gfamily_\Gamma$
and $\gfamily_\Sigma$ are families of primary cyclic groups and
$\tau\co W(\Gamma) \to W(\Sigma)$ is an isomorphism, then there
exists a labeled-graph isomorphism $t\co (\Gamma, \gfamily_\Gamma)
\to (\Sigma, \gfamily_\Sigma)$ for which the following property
holds:
$$\forall u \in \vertices_\Gamma \;\; t(u) \in \support{\tau(u)}.$$
\end{lem}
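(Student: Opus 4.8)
The plan is to exploit the fact that, since $\Gamma$ and $\Sigma$ are complete, $W(\Gamma) = \bigoplus_{u \in \vertices_\Gamma} G_u$ and $W(\Sigma) = \bigoplus_{w \in \vertices_\Sigma} G_w$ are finite abelian groups, written as internal direct products of their (primary cyclic) vertex groups. In this abelian setting a reduced word for an element $g$ simply records its nontrivial coordinates, so $\support{g}$ is exactly the set of vertices $w$ for which the $G_w$-component of $g$ is nontrivial. The task is therefore to produce, for each vertex $u$, a vertex $t(u)$ with $G_{t(u)} \cong G_u$ and with nontrivial $G_{t(u)}$-component in $\tau(u)$, and to do so bijectively.

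First I would isolate each order level functorially. For a prime $p$ and an integer $e \geq 1$, the assignment $A \mapsto \Omega_{p,e}(A) := (A[p] \cap p^{e-1}A)/(A[p]\cap p^e A)$ is a functor on finite abelian groups landing in $\mathbb{F}_p$-vector spaces, since both subgroups are characteristic. Computing on $W(\Sigma) = \bigoplus_w G_w$ gives $\Omega_{p,e}(W(\Sigma)) \cong \bigoplus_{w : \abs{G_w} = p^e} G_w[p]$, an $\mathbb{F}_p$-space with a distinguished basis indexed by the vertices $w$ of order $p^e$ (the basis vector for $w$ being the class of a generator of the order-$p$ subgroup of $G_w$); the analogous statement holds for $W(\Gamma)$. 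The isomorphism $\tau$ induces an isomorphism $\bar\tau_{p,e}\co \Omega_{p,e}(W(\Gamma)) \to \Omega_{p,e}(W(\Sigma))$, and a direct computation shows that $\bar\tau_{p,e}$ sends the basis vector $\bar u$ of an order-$p^e$ vertex $u$ (the class of $p^{e-1}u$) to the class of $p^{e-1}\tau(u)$. Writing $\tau(u) = \sum_w b_w$ with $b_w \in G_w$, the $w$-coordinate of $\bar\tau_{p,e}(\bar u)$ is the class of $p^{e-1}b_w$, which is nonzero only if $b_w \neq 0$, that is, only if $w \in \support{\tau(u)}$.

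This reduces the lemma to a linear-algebra fact. Fix $p$ and $e$ and let $M$ be the matrix of $\bar\tau_{p,e}$ with respect to the vertex bases; its rows and columns are indexed by the order-$p^e$ vertices of $\Sigma$ and of $\Gamma$ respectively, and by the previous paragraph a nonzero entry $M_{w,u}$ forces $w \in \support{\tau(u)}$. Since $\bar\tau_{p,e}$ is an isomorphism, $\determinant(M) \neq 0$, so in the Leibniz expansion $\determinant(M) = \sum_\sigma \pm \prod_u M_{\sigma(u),u}$ at least one term is nonzero; the corresponding permutation $\sigma_{p,e}$ of the order-$p^e$ vertices satisfies $M_{\sigma_{p,e}(u),u} \neq 0$, hence $\sigma_{p,e}(u) \in \support{\tau(u)}$, for every order-$p^e$ vertex $u$. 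Assembling the maps $\sigma_{p,e}$ over all pairs $(p,e)$ — each vertex has a unique prime-power order and so lies in exactly one level — yields a bijection $t\co \vertices_\Gamma \to \vertices_\Sigma$ sending each vertex to a vertex of equal order in its $\tau$-support. Because $\Gamma$ and $\Sigma$ are complete, adjacency is preserved automatically and $G_u \cong G_{t(u)}$, so $t$ is the desired labeled-graph isomorphism.

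I expect the main obstacle to be arranging the \emph{simultaneous} matching at each level rather than merely a per-vertex choice; the key device that resolves it is the observation that the induced map on $\Omega_{p,e}$ is an isomorphism, so its matrix has nonzero determinant and the Leibniz expansion supplies a single permutation realizing all the support memberships at once. The routine points to verify carefully are the functoriality and the explicit vertex basis of $\Omega_{p,e}$, together with the identification of $\support{\cdot}$ with the set of nontrivial coordinates in the abelian case.
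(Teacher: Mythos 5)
Your argument is correct, and the engine driving it is the same as the paper's: both proofs reduce to an invertible matrix over $\mathbb{F}_p$, expand its nonzero determinant by the Leibniz formula, and read off a permutation $\sigma$ with every entry $a_{m\,\sigma(m)}$ nonzero mod $p$, hence $\sigma(m) \in \support{\tau(g_m)}$. The difference is in how that invertible matrix is produced. The paper assumes (WLOG) that all vertex groups are $p$-primary for a single prime, forms the full $k \times k$ exponent-sum matrix of $\tau$, and cites a theorem on automorphisms of abelian $p$-groups to conclude that its reduction mod $p$ lies in ${\rm GL}_k(\mathbb{F}_p)$; the resulting permutation mixes all order levels at once, and the paper's closing ``by construction $t$ has the required properties'' silently uses the fact that any permutation contributing a unit term must actually match vertices of equal order (because entries $a_{\ell m}$ with $\abs{G_{g_\ell}} < \abs{G_{s_m}}$ are forced to be divisible by $p$, and a counting argument on exponents then pins $\sigma$ down to be order-preserving). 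Your filtration by the characteristic subgroups $A[p] \cap p^{e-1}A$ sidesteps both issues: invertibility of each block comes for free from functoriality rather than from an external citation, and since each quotient $\Omega_{p,e}$ sees only the vertices of order exactly $p^e$, the permutation $\sigma_{p,e}$ automatically pairs vertices with isomorphic vertex groups, so the requirement $G_u \cong G_{t(u)}$ needs no separate verification. The one point worth stating explicitly in your write-up is that equality of $\dim_{\mathbb{F}_p}\Omega_{p,e}(W(\Gamma))$ and $\dim_{\mathbb{F}_p}\Omega_{p,e}(W(\Sigma))$ is what makes each block square, so that the Leibniz expansion ranges over genuine bijections between the two sets of order-$p^e$ vertices; you use this implicitly. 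In short: same determinant trick, but your level-by-level decomposition is more self-contained and closes a small gap that the paper leaves to the reader.
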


\begin{proof}
Without loss of generality we may assume that $\gfamily_\Gamma$ and
$\gfamily_\Sigma$ are families of $p$-primary cyclic groups for a
fixed prime $p$.

Let $\vertices_\Gamma = \{g_1, g_2, \dots, g_k\}$, let
$\vertices_\Sigma = \{s_1, s_2, \dots, s_k\}$ and let $A = (a_{\ell
m})$ be the matrix of integers such that $s_m$ appears with exponent
sum $a_{\ell m}$ in $\tau(g_\ell)$. Recall that
\begin{equation}\label{determinantdefn}
\determinant A = \sum_{\sigma \in {\rm Sym}(n)} sgn(\sigma) \, a_{1
\sigma(1)} a_{2 \sigma(2)} \dots a_{k \sigma(k)},
\end{equation}
where ${\rm Sym(n)}$ denotes the symmetric group on the set $\{1, 2,
\dots, n\}$. By \cite[Theorem 3.6]{AutOfAbelianGroups}, $A \; ({\rm
mod} \; p) \in {\rm GL}_k({\mathbb F}_{p})$.  Hence $\determinant A$
is not divisible by $p$ and at least one term of the sum
(\ref{determinantdefn}) is not divisible by $p$.  Thus there exists
$\sigma \in {\rm Sym}(n)$ such that $a_{m \, \sigma(m)}$ is
nontrivial modulo $p$ for each $m$; hence $s_{\sigma(m)} \in
\support{\tau(g_m)}$ for each $m$.   Define $t(g_m) = s_{\sigma(m)}$
for each $1 \leq m \leq k$.  By construction, $t$ has the required
properties.
\end{proof}

We now give an analogue of Theorem \ref{PreciseRigidityForRAAG}. The
proof below is an interpretation of Radcliffe's argument
\cite{Radcliffe}, with Lemma \ref{AbelianGroupLemma} applied at the
appropriate place to strengthen the result.

\begin{thm}[cf. Radcliffe \cite{Radcliffe}]\label{PreciseRigidityForGPIFG}
If $\gfamily_\Gamma$ and $\gfamily_\Sigma$ are families of primary
cyclic groups and $\tau\co  W(\Gamma) \to W(\Sigma)$ is an
isomorphism, then there exists a labeled-graph isomorphism $t\co
(\Gamma, \gfamily_\Gamma) \to (\Sigma, \gfamily_\Sigma)$ for which
the following property holds:
$$\forall u \in \vertices_\Gamma \;\; t(u) \in \cyclicsupport{\tau(u)}.$$
\end{thm}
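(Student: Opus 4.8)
The goal is to upgrade Radcliffe's rigidity theorem (isomorphism of graph products with primary cyclic vertex groups implies isomorphism of labeled-graphs) to the precise statement that a labeled-graph isomorphism can be chosen respecting cyclic supports, i.e. $t(u) \in \cyclicsupport{\tau(u)}$. Since the isomorphism of labeled-graphs is already known from Radcliffe's work, the real content is the \emph{control} on where each vertex is sent. The strategy is to pass through the $T_0$-quotient, where the conjugacy classes of maximal finite subgroups give a combinatorial handle, and then to use Lemma \ref{AbelianGroupLemma} on each maximal complete subgraph to pin down the target vertex up to its support.

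\textbf{Reduction via $\hat{\tau}$ and $T_0$-quotients.} First I would replace $\tau$ by the simplified isomorphism $\hat{\tau}$ from Lemma \ref{SimplyfyingIsoOfT}, which sends each vertex $u$ to the cyclically-reduced representative $[\tau(u)]$ of its conjugacy class; this lands each vertex inside some $W(\Theta)$ for $\Theta \in \mathrm{MCS}(\Sigma)$, which is exactly the setting where Lemma \ref{AbelianGroupLemma} applies. Next, using Lemma \ref{IsomorphismAndT0Isomorphism} and Lemma \ref{T0Lemma}(\ref{T0QuotientIsomorphic}), I would work at the level of the $T_0$-quotients $(\Gamma_0, \gfamily_{\Gamma_0})$ and $(\Sigma_0, \gfamily_{\Sigma_0})$, whose vertex groups are finite abelian groups. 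The key point of the $T_0$ setup is that by Lemma \ref{MaximalFiniteSubgroupsAndMCS}, the subgroups $W(\Theta)$ for $\Theta \in \mathrm{MCS}(\Gamma_0)$ are precisely (representatives of) the maximal finite subgroups; since $\hat\tau$ is an isomorphism, it must carry each such maximal abelian subgroup to a conjugate of a maximal abelian subgroup of $W(\Sigma_0)$, inducing a bijection $\mathrm{MCS}(\Gamma_0) \to \mathrm{MCS}(\Sigma_0)$.

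\textbf{Pinning down vertices via the clique isomorphisms.} For each $\Theta \in \mathrm{MCS}(\Gamma_0)$, the restriction of $\hat{\tau}$ gives an isomorphism of the finite abelian group $W(\Theta)$ onto the matching complete-graph group $W(\hat\tau(\Theta))$, so Lemma \ref{AbelianGroupLemma} yields a labeled-graph isomorphism on that clique sending each vertex $u$ into $\support{\hat{\tau}(u)}$. The task is then to glue these clique-wise isomorphisms into a single globally consistent labeled-graph isomorphism $t$. The consistency that must be checked is that a vertex $u$ lying in two different maximal cliques of $\Gamma_0$ receives the same image under the two applications of Lemma \ref{AbelianGroupLemma}; this is where the $T_0$ property is essential, because a $T_0$ graph is determined by its clique structure and the $\sim$-classes being singletons forces the local choices to agree. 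Once $t$ is defined on $\Gamma_0$ with $t(u) \in \support{\hat\tau(u)}$, I would translate back through the $T_0$-quotient correspondence (Lemma \ref{IsomorphismAndT0Isomorphism}) and use the relationship between $\support{\hat\tau(u)}$ for the reduced word $[\tau(u)]$ and $\cyclicsupport{\tau(u)}$ — namely that the cyclic support of $g$ equals the ordinary support of its cyclically-reduced form — to conclude $t(u) \in \cyclicsupport{\tau(u)}$.

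\textbf{The main obstacle.} I expect the hardest step to be the gluing/consistency argument: verifying that the independently-chosen permutations $\sigma_\Theta$ from Lemma \ref{AbelianGroupLemma}, one per maximal clique, assemble into a single bijection $t$ that is simultaneously a labeled-graph isomorphism and satisfies the support condition on \emph{every} vertex. The determinant argument in Lemma \ref{AbelianGroupLemma} produces \emph{some} valid permutation for each clique but offers no a priori guarantee that overlapping cliques produce compatible choices, so the real work is to show that the $T_0$ property (together with the fact that $\hat\tau$ is a genuine isomorphism, not merely a clique-wise one) rigidifies these choices. I would likely argue that the induced bijection on $\sim$-classes is already forced by the action on maximal finite subgroups, reducing the ambiguity to within a single clique where Lemma \ref{AbelianGroupLemma} handles everything at once; the reader should check that this leaves no genuine incompatibility.
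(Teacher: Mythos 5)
Your overall architecture matches the paper's: pass to $\hat{\tau}$ via Lemma \ref{SimplyfyingIsoOfT}, use the bijection on maximal finite subgroups (hence on maximal cliques) to get control at the level of the $T_0$-quotient, apply Lemma \ref{AbelianGroupLemma} inside finite abelian pieces, and finish with $\cyclicsupport{\tau(u)} = \support{[\tau(u)]}$. But the step you yourself flag as the main obstacle --- gluing one application of Lemma \ref{AbelianGroupLemma} per maximal clique into a single well-defined bijection --- is a genuine gap, and as you have set it up it would not go through. A vertex $u$ lying in two maximal cliques $\Theta_1 \neq \Theta_2$ receives two images $t_{\Theta_1}(u)$ and $t_{\Theta_2}(u)$, each merely constrained to lie in $\support{\hat{\tau}(u)} \cap \Theta_i'$; the determinant argument gives no way to force these to coincide, and nothing in your sketch supplies one. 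Saying ``the reader should check that this leaves no genuine incompatibility'' leaves the hardest point unproved.

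The paper sidesteps the problem entirely by never applying Lemma \ref{AbelianGroupLemma} to maximal cliques. It first builds the map $t_0$ on the $T_0$-quotient: since $\Gamma_0$ satisfies $T_0$, each class $\tilde{u}$ is the intersection of the maximal cliques containing it minus the union of those not containing it, and since $\hat{\tau}$ induces a bijection $\mathrm{MCS}(\Gamma_0) \to \mathrm{MCS}(\Sigma_0)$, this forces $\hat{\tau}(G_{\tilde{u}}) = G_{\tilde{v}}$ for a unique $\tilde{v}$, which defines $t_0$. Then $\hat{\tau}$ restricts to an isomorphism $W(G_{\tilde{u}}) \to W(G_{t_0(\tilde{u})})$ for each class, and Lemma \ref{AbelianGroupLemma} is applied once per $\sim_\Gamma$-\emph{class} (each class spans a complete subgraph with primary cyclic vertex groups). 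Because the classes partition $\vertices_\Gamma$, these local bijections assemble into a global map with no consistency condition to verify, and the fact that the result is a labeled-graph isomorphism comes from $t_0$ being one together with Lemma \ref{T0Lemma}(\ref{AdjacenyInT0}), exactly as in the proof of Lemma \ref{IsomorphismAndT0Isomorphism}. Your closing remark that the bijection on $\sim$-classes is ``forced by the action on maximal finite subgroups'' gestures at this, but to complete your argument you would need to make that the primary construction (apply the lemma per class, not per clique), rather than a post hoc repair of the per-clique choices.
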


\begin{proof}
%We begin by repeating Radcliffe's argument \cite[pp.
%1086-1087]{Radcliffe}.

%In this paragraph we define a map $t_0\co \vertices_{\Gamma_0} \to
%\vertices_{\Sigma_0}$. Let $\hat{\tau}$ denote the isomorphism
%defined in Lemma \ref{SimplyfyingIsoOfT} and let $\tilde{u} \in
%\vertices_{\Gamma_0}$. Since $\Gamma_0$ satisfies the $T_0$
%property, Lemma \ref{PartitionsOfMCS} gives that there exists a
%partition of $\mathrm{MCS}(\Gamma_0)$ into disjoint subsets $A$ and
%$B$ such that
%$$\tilde{u} = \underset{\Theta \in A}{\cap} \Theta - \underset{\Delta \in B}{\cup} \Delta.$$
%But for each $\Theta \in {\rm MCS}(\Theta)$, there exists a unique
%$\Theta' \in {\rm MCS}(\Sigma)$ such that $\hat{\tau}(W(\Theta)) =
%W(\Theta')$.  By Lemma \ref{PartitionsOfMCS} again,
%$$\underset{\Theta \in A}{\cap} \Theta' - \underset{\Delta \in B}{\cup} \Delta'$$ has cardinality exactly one (it is non-empty because
%$\hat{\tau}(G_{\tilde{u}})$ is non-trivial).  Thus there exists a
%unique vertex $\tilde{v} \in \vertices_{\Sigma_0}$ such that
%$\hat{\tau}(G_{\tilde{u}}) = G_{\tilde{v}}$.  Define $t_0(\tilde{u})
%= \tilde{v}$.

In this paragraph we define a map $t_0\co \vertices_{\Gamma_0} \to
\vertices_{\Sigma_0}$.  Let $\tilde{u} \in \vertices_{\Gamma_0}$. It
follows from the $T_0$ property that $\tilde{u} \in
\vertices_{\Gamma_0}$ is uniquely identified by its memberships and
non-memberships in elements of ${\rm MCS}(\Gamma_0)$.  That is, the
singleton set $\{\tilde{u}\}$ is the intersection of the maximal
complete subgraphs of $\Gamma_0$ which contain $\tilde{u}$ minus the
union of the maximal complete subgraphs of $\Gamma_0$ which do not
contain $\tilde{u}$.  It follows from the definition that
$\hat{\tau}$ determines a one-to-one correspondence between the sets
${\rm MCS}(\Gamma_0)$ and ${\rm MCS}(\Sigma_0)$. Thus
$\hat{\tau}(G_{\tilde{u}})$ may be written as an intersection of
elements in ${\rm MCS}(\Sigma_0)$ minus a union of elements in ${\rm
MCS}(\Sigma_0)$. The $T_0$ property then implies that
$\hat{\tau}(G_{\tilde{u}}) = G_{\tilde{v}}$ for some $\tilde{v} \in
\vertices_{\Sigma_0}$. Define $t_0(\tilde{u}) = \tilde{v}$.

From the definitions (or using Lemma \ref{CentralizerResult}), the
reader may confirm that $t_0$ is a labeled-graph isomorphism and
that $\hat{\tau}$ restricts to an isomorphism $W(G_{\tilde{u}}) \to
W(G_{t_0(\tilde{u})})$ for each $\tilde{u} \in
\vertices_{\Gamma_0}$.  Following the proof of Lemma
\ref{IsomorphismAndT0Isomorphism}, we may lift $t_0$ to a
labeled-graph isomorphism $t\co (\Gamma, \gfamily_\Gamma) \to
(\Sigma, \gfamily_\Sigma)$.

Now, recall that each $\sim_\Gamma$ (resp. $\sim_\Sigma$)
equivalence class of vertices $\tilde{u}$ determines a complete
subgraph of $\Gamma$ (resp. $\Sigma$) and hence a finite abelian
subgroup of $W(\Gamma)$ (resp. $(W(\Sigma)$). By Lemma
\ref{AbelianGroupLemma}, we may choose the lift $t$ of $t_0$ so
that, on each subgraph of $\Gamma$ determined by a single
equivalence class $\tilde{u}$ of vertices, $t$ restricts to a
labeled-graph isomorphism with the property that $t(u) \in
\support{\hat{\tau}(u)}$ for each $u \in \tilde{u}$.  It follows
that $t(u) \in \cyclicsupport{\tau(u)}$ for each $u \in
\vertices_\Gamma$.
\end{proof}

\section{The Main Theorem}\label{MainTheoremSection}

We remind the reader that $(\Gamma, \gfamily_\Gamma)$ and $(\Sigma,
\gfamily_\Sigma)$ are labeled-graphs with directly-indecomposable
cyclic vertex groups.  We now assume that there exists a group
isomorphism $\phi\co  W(\Gamma) \to W(\Sigma)$.  Our task is to
exhibit a labeled-graph isomorphism $(\Gamma, \gfamily_\Gamma) \to
(\Sigma, \gfamily_\Sigma)$.

Let $T(\Gamma)$ (resp. $T(\Sigma)$) denote the subgroup of
$W(\Gamma)$ (resp. $W(\Sigma)$) generated by the elements of finite
order.
%Note that $T(\Gamma)$ is a characteristic subgroup of $W(\Gamma)$ and that
%$W(\Gamma_A) \cap T(\Gamma) = \{1\}$.
Let $\rho_{\Sigma_A}\co  W(\Sigma) \to W(\Sigma_A)$ denote the
retraction homomorphism determined by $$\forall u \in
\vertices_\Sigma \;\; u
\mapsto \left\{%
\begin{array}{ll}
  u & \hbox{if } u \in \vertices_{\Sigma_A} \\
  1 & \hbox{if } u \in \vertices_{\Sigma_T}. \\
\end{array}%
\right.$$

%It is known that $T(\Gamma)$ is the normal closure of $W(\Gamma_T)$ in $W(\Gamma)$, and that $T(\Gamma)$ is a
%characteristic subgroup of $W(\Gamma)$.

%\begin{lem}\label{ProjectionsAreIsomorphisms}
%The quotient map $W(\Gamma) \to W(\Gamma) / T(\Gamma)$ restricts to an
%isomorphism $$\proj_{\Gamma_A}\co  W(\Gamma_A) \to W(\Gamma) / T(\Gamma).$$
%\end{lem}

%\begin{proof}
%Let $\rho_{\Gamma_A}\co  W(\Gamma) \to W(\Gamma_A)$ denote the retraction
%homomorphism determined by $$\forall u \in \vertices_\Gamma \;\; u
%\mapsto \left\{%
%\begin{array}{cc}
%  u & u \in \vertices_{\Gamma_A} \\
%  1 & u \in \vertices_{\Gamma_T}. \\
%\end{array}%
%\right.$$ Since $T(\Gamma)$ is generated by elements with trivial
%image under $\rho_{\Gamma_A}$, we have $W(\Gamma_A) \cap T(\Gamma) = \{1\}$.
%For each $g, h \in W(\Gamma_A)$,
%\begin{eqnarray*}
%g T(\Gamma) = h T(\Gamma) & \Rightarrow & gh^{-1} \in W(\Gamma_A) \cap T(\Gamma) \\
% & \Rightarrow & gh^{-1} = 1;
%\end{eqnarray*} hence $\proj_{\Gamma_A}$ is injective.
%For each $w \in W(\Gamma)$,  $w T(\Gamma) = \rho_{\Gamma_A}(w) T(\Gamma)$
%and $\rho_{\Gamma_A}(w) \in W(\Gamma_A)$; hence $\proj_{\Gamma_A}$ is
%surjective.
%\end{proof}

\begin{lem}\label{IsoRestrictsToA}
Let $\alpha\co   \vertices_{\Gamma_A} \to W(\Sigma_A)$ be defined by
$v \mapsto \rho_{\Sigma_A} \circ \phi(v).$ Then $\alpha$ extends to
an isomorphism $\alpha\co  W(\Gamma_A) \to W(\Sigma_A)$.
\end{lem}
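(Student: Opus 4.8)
The plan is to route the map $\alpha$ through the quotients of $W(\Gamma)$ and $W(\Sigma)$ by their torsion subgroups, where the retractions $\rho_{\Gamma_A}$ and $\rho_{\Sigma_A}$ become genuine isomorphisms onto the Artin parts. First I would record that the natural inclusion $\iota_\Gamma\co W(\Gamma_A) \to W(\Gamma)$ is a section of $\rho_{\Gamma_A}$, so that the composite $\rho_{\Sigma_A} \circ \phi \circ \iota_\Gamma\co W(\Gamma_A) \to W(\Sigma_A)$ is a homomorphism which sends each vertex $v \in \vertices_{\Gamma_A}$ to $\rho_{\Sigma_A}(\phi(v))$; this composite is precisely the claimed extension of $\alpha$, so the only real content of the lemma is that it is bijective. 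The key structural fact I would isolate is that $\ker \rho_{\Gamma_A} = T(\Gamma)$ (and likewise for $\Sigma$). One inclusion is immediate: the torsion vertex generators lie in $T(\Gamma)$, which is normal because the set of finite-order elements is closed under conjugation, so the normal closure of those generators—which is exactly $\ker \rho_{\Gamma_A}$—is contained in $T(\Gamma)$. The reverse inclusion uses that $W(\Gamma_A)$ is a right-angled Artin group and hence torsion-free, so every finite-order element of $W(\Gamma)$ must map to the identity under $\rho_{\Gamma_A}$, whence $T(\Gamma) \subseteq \ker \rho_{\Gamma_A}$. By the first isomorphism theorem this gives isomorphisms $r_\Gamma\co W(\Gamma)/T(\Gamma) \to W(\Gamma_A)$ and $r_\Sigma\co W(\Sigma)/T(\Sigma) \to W(\Sigma_A)$ induced by the retractions.

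Next I would observe that $\phi$ carries torsion to torsion: an isomorphism preserves the order of each element, so it maps the set of finite-order elements of $W(\Gamma)$ bijectively onto that of $W(\Sigma)$, and therefore $\phi(T(\Gamma)) = T(\Sigma)$. Consequently $\phi$ descends to an isomorphism $\bar\phi\co W(\Gamma)/T(\Gamma) \to W(\Sigma)/T(\Sigma)$ on the quotients. It then remains to check, on generators, that $\alpha$ agrees with the transported map $r_\Sigma \circ \bar\phi \circ r_\Gamma^{-1}$: for $g \in W(\Gamma_A)$ the isomorphism $r_\Gamma^{-1}$ sends $g$ to the coset $\iota_\Gamma(g)\,T(\Gamma)$, $\bar\phi$ sends this to $\phi(\iota_\Gamma(g))\,T(\Sigma)$, and $r_\Sigma$ sends that to $\rho_{\Sigma_A}(\phi(\iota_\Gamma(g))) = \alpha(g)$. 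Hence $\alpha = r_\Sigma \circ \bar\phi \circ r_\Gamma^{-1}$ is a composite of three isomorphisms, and is therefore an isomorphism.

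The step I expect to be the main obstacle is the clean identification $\ker \rho_{\Gamma_A} = T(\Gamma)$, since it is where the two a priori different descriptions of the torsion—``subgroup generated by all finite-order elements'' versus ``normal closure of the finite-order vertex groups''—must be reconciled, and this reconciliation rests on the torsion-freeness of the right-angled Artin group $W(\Gamma_A)$ together with the normality of $T(\Gamma)$. Once that equality is in hand, the remaining verifications (that $\phi$ preserves $T$ and that $\alpha$ is the induced quotient map) are routine diagram-chasing on vertices, and the conclusion that $\alpha$ is an isomorphism is formal.
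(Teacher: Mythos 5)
Your proof is correct and follows essentially the same route as the paper: both pass to the quotients $W(\Gamma)/T(\Gamma)$ and $W(\Sigma)/T(\Sigma)$, use that $\phi(T(\Gamma))=T(\Sigma)$ to descend $\phi$, and identify these quotients with $W(\Gamma_A)$ and $W(\Sigma_A)$ via the retractions. Your explicit verification that $\ker\rho_{\Gamma_A}=T(\Gamma)$ (normal closure of torsion vertices versus subgroup generated by all torsion, reconciled by torsion-freeness of $W(\Gamma_A)$) is a welcome expansion of what the paper compresses into the assertions $W(\Gamma_A)\cap T(\Gamma)=\{1\}$ and ``calculation confirms.''
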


\begin{proof}
Since $\phi(T(\Gamma)) = T(\Sigma)$, the isomorphism $\phi\co
W(\Gamma) \to W(\Sigma)$ induces an isomorphism $\hat{\phi}\co
W(\Gamma) / T(\Gamma) \to W(\Sigma) / T(\Sigma)$. Since $W(\Gamma_A)
\cap T(\Gamma) = \{1\}$ and $\vertices_{\Gamma_T} \subset
T(\Gamma)$, the quotient map $W(\Gamma) \to W(\Gamma) / T(\Gamma)$
restricts to an isomorphism $\proj_{\Gamma_A}\co W(\Gamma_A) \to
W(\Gamma) / T(\Gamma)$. Similarly, the quotient map $W(\Sigma) \to
W(\Sigma) / T(\Sigma)$ restricts to an isomorphism
$\proj_{\Sigma_A}\co W(\Sigma_A) \to W(\Sigma) / T(\Sigma).$ Thus
$(\proj_{\Sigma_A})^{-1} \circ \hat{\phi} \circ \proj_{\Gamma_A}$ is
an isomorphism $W(\Gamma_A) \to W(\Sigma_A)$. Calculation confirms
that $(\proj_{\Sigma_A})^{-1} \circ \hat{\phi} \circ
\proj_{\Gamma_A} = \rho_{\Sigma_A} \circ \phi.$

%$$\begin{array}{ccccc}
%  w \in W(\Gamma_A) & & \underset{\cong}{\overset{\alpha}{\longrightarrow}} && \proj_{\Sigma_A} \circ \phi(w) \in W(\Sigma_A) \\
%  & & \\
%  \cong \downarrow^{\proj_{\Gamma_A}}&&  && \cong \downarrow^{\proj_{\Sigma_A}} \\
%  & & \\
%  w T(\Gamma) \in W(\Gamma) / T(\Gamma)&& \underset{\cong}{\overset{\hat{\phi}}{\longrightarrow}} && \phi(w) T(\Sigma) = \proj_{\Sigma_A} \circ \phi(w) T(\Sigma) \in W(\Sigma) / T(\Sigma). \\
%\end{array}$$
\end{proof}

Recall that, for an element $w$ of finite order in $W(\Sigma)$, we
write $[w]$ for the unique element of minimal length in the
conjugacy class of $w$.  As with Lemma \ref{SimplyfyingIsoOfT}, the
following lemma may be verified by elementary means.

\begin{lem}[cf. Lemma \ref{SimplyfyingIsoOfT}]\label{IsoRestrictsToT}
Let $\tau\co \vertices_{\Gamma_T} \to W(\Sigma_T)$ be defined by $u
\mapsto [\phi(u)].$ Then $\tau$ extends to an isomorphism $\tau\co
W(\Gamma_T) \to W(\Sigma_T)$.
\end{lem}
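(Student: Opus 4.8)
The plan is to follow the proof of Lemma \ref{SimplyfyingIsoOfT} essentially verbatim, observing first that the vertex groups of $\Gamma_T$ and $\Sigma_T$ are exactly the directly-indecomposable \emph{finite} cyclic groups, that is, primary cyclic groups, so that $(\Gamma_T, \gfamily_{\Gamma_T})$ and $(\Sigma_T, \gfamily_{\Sigma_T})$ fall within the hypotheses of that lemma. First I would check that $\tau$ is well defined as a map into $W(\Sigma_T)$: for a torsion vertex $u$ the element $\phi(u)$ has finite order (because $\phi$ is an isomorphism and $u$ has finite order), so by the remark preceding Lemma \ref{SimplyfyingIsoOfT} the element $[\phi(u)]$ is defined and lies in $W(\Theta)$ for some $\Theta \in {\rm MCS}(\Sigma_T)$, hence in $W(\Sigma_T)$.

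Next I would extend $\tau$ to a homomorphism by verifying that the defining relations of the presentation of $W(\Gamma_T)$ implicit in $(\Gamma_T, \gfamily_{\Gamma_T})$ are preserved. Conjugation preserves order, so $\tau(u) = [\phi(u)]$ has the same order as $u$ and each relation $u^{n_u}=1$ is preserved. For adjacent vertices $u, v \in \vertices_{\Gamma_T}$ the subgroup $\langle u, v\rangle$ is finite, hence so is $\langle \phi(u), \phi(v)\rangle$; by Lemma \ref{MaximalFiniteSubgroupsAndMCS} there are $\Theta \in {\rm MCS}(\Sigma_T)$ and $w \in W(\Sigma)$ with $\phi(u), \phi(v) \in w W(\Theta) w^{-1}$, and one checks that $[\phi(u)]$ and $[\phi(v)]$ are then the minimal-length conjugates $w^{-1}\phi(u)w,\, w^{-1}\phi(v)w \in W(\Theta)$. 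Since $W(\Theta)$ is abelian, $[\phi(u)]$ and $[\phi(v)]$ commute and the relation $uvu^{-1}v^{-1}=1$ is preserved.

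To see that $\tau$ is an isomorphism I would introduce the analogous homomorphism $\tau'\co W(\Sigma_T) \to W(\Gamma_T)$ determined by $s \mapsto [\phi^{-1}(s)]$ and show that $\tau' \circ \tau$ fixes each vertex $v \in \vertices_{\Gamma_T}$, exactly as in Lemma \ref{SimplyfyingIsoOfT}: writing $\phi(v) = w a w^{-1}$ with $a = [\phi(v)] = \tau(v) \in W(\Theta)$, the element $\phi^{-1}(a)$ is a conjugate of $v$, so $\tau'(\tau(v)) = [\phi^{-1}(a)] = [v] = v$. By symmetry $\tau \circ \tau'$ fixes each vertex of $\Sigma_T$, and hence $\tau$ is an isomorphism.

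The main obstacle is the commutation step: one must confirm that the minimal-length representatives $[\phi(u)]$ and $[\phi(v)]$ genuinely sit inside a \emph{single} abelian factor $W(\Theta)$ and therefore commute. This reduces to the observation that an element of $W(\Theta)$ for a complete subgraph $\Theta$ (a commuting product) is already of minimal length in its conjugacy class, so that the common conjugates $w^{-1}\phi(u)w$ and $w^{-1}\phi(v)w$ must be the bracketed representatives; the remaining verifications, including the identity $\tau'(a) = [\phi^{-1}(a)]$ for $a \in W(\Theta)$ used above, are then routine.
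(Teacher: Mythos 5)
Your proposal is correct and matches the paper's intent exactly: the paper offers no written proof of Lemma \ref{IsoRestrictsToT}, saying only that, as with Lemma \ref{SimplyfyingIsoOfT}, it ``may be verified by elementary means,'' and your argument is precisely that verification, transplanting the proof of Lemma \ref{SimplyfyingIsoOfT} to the torsion subgraphs via Lemma \ref{MaximalFiniteSubgroupsAndMCS}. You also correctly isolate (and resolve) the two points the paper glosses over, namely that the common conjugates $w^{-1}\phi(u)w$ and $w^{-1}\phi(v)w$ are the bracketed minimal-length representatives, and that $\tau'(a)=[\phi^{-1}(a)]$ for $a\in W(\Theta)$.
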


Lemmas \ref{IsoRestrictsToA} and \ref{IsoRestrictsToT} allow us to
use Theorems \ref{PreciseRigidityForRAAG} and
\ref{PreciseRigidityForGPIFG} to prove our main result.

\begin{thm}\label{RigidityForGPICG}
If $(\Gamma, \gfamily_\Gamma)$ and $(\Sigma, \gfamily_\Sigma)$ are
labeled-graphs with directly-indecomposable cyclic vertex groups and
there exists a group isomorphism $\phi\co W(\Gamma) \to W(\Sigma)$,
then there exists a labeled-graph isomorphism $f\co (\Gamma,
\gfamily_\Gamma) \to (\Sigma, \gfamily_\Sigma)$.
%such that the
%following property holds:
%$$\forall v \in \vertices_\Gamma \;\; f(v) \in \cyclicsupport{\phi(v)}.$$
\end{thm}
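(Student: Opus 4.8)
The plan is to split the isomorphism $\phi$ into its behaviour on the torsion part and on the torsion-free part of $W(\Gamma)$, treat each part with the appropriate rigidity theorem already in hand, and then glue the two resulting labeled-graph isomorphisms into a single labeled-graph isomorphism $f\co(\Gamma,\gfamily_\Gamma)\to(\Sigma,\gfamily_\Sigma)$. The key structural observation is that the vertex set $\vertices_\Gamma$ partitions as $\vertices_{\Gamma_T}\sqcup\vertices_{\Gamma_A}$ according to whether the vertex group is finite or infinite, and likewise for $\Sigma$. Since $\phi$ must carry the subgroup generated by torsion elements to its counterpart, i.e.\ $\phi(T(\Gamma))=T(\Sigma)$, the two pieces of the data do not interfere with each other at the level of these canonical subgroups.

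First I would invoke Lemma \ref{IsoRestrictsToT} to obtain an isomorphism $\tau\co W(\Gamma_T)\to W(\Sigma_T)$ between the torsion subgraph products, and then apply Theorem \ref{PreciseRigidityForGPIFG} to $\tau$ to produce a labeled-graph isomorphism $t\co(\Gamma_T,\gfamily_{\Gamma_T})\to(\Sigma_T,\gfamily_{\Sigma_T})$ with the support property $t(u)\in\cyclicsupport{\tau(u)}$ for each $u\in\vertices_{\Gamma_T}$. Symmetrically, I would invoke Lemma \ref{IsoRestrictsToA} to obtain an isomorphism $\alpha\co W(\Gamma_A)\to W(\Sigma_A)$ between the torsion-free (Artin) subgraph products, and then apply Theorem \ref{PreciseRigidityForRAAG} to $\alpha$ to produce a labeled-graph isomorphism $a\co(\Gamma_A,\gfamily_{\Gamma_A})\to(\Sigma_A,\gfamily_{\Sigma_A})$ with $a(v)\in\cyclicsupport{\alpha(v)}$ for each $v\in\vertices_{\Gamma_A}$. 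I would then define $f\co\vertices_\Gamma\to\vertices_\Sigma$ to be the common extension of $a$ and $t$. Because $f$ restricts to a bijection on each of the two halves and the two halves exhaust both vertex sets, $f$ is a bijection preserving vertex-group isomorphism type.

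The main obstacle, and the only genuinely delicate point, is verifying that $f$ preserves adjacency across the torsion/torsion-free boundary: the theorems I am quoting only control edges \emph{within} $\Gamma_T$ and \emph{within} $\Gamma_A$, whereas $\Gamma$ may have edges joining a finite-order vertex $u$ to an infinite-order vertex $v$. To handle these mixed edges I would exploit the support information encoded in the two support-membership conclusions together with the centralizer description in Lemma \ref{CentralizerResult}. Concretely, an edge between $u\in\vertices_{\Gamma_T}$ and $v\in\vertices_{\Gamma_A}$ means $u$ and $v$ commute, so $u\in C_{W(\Gamma)}(v)=W(\starofCPword{v})$, which pins down the adjacency in terms of supports; translating through $\phi$ and the fact that $f(u)\in\cyclicsupport{\phi(u)}$-type data, one shows $f(u)$ and $f(v)$ must likewise commute in $W(\Sigma)$, hence be adjacent in $\Sigma$. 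The converse direction is symmetric using $\phi^{-1}$.

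The remaining steps are routine bookkeeping: the within-$\Gamma_T$ and within-$\Gamma_A$ adjacency is preserved by construction of $t$ and $a$, and the non-adjacency cases follow by contraposition from the same centralizer argument. Once adjacency is shown to be preserved in both directions and vertex groups are matched up to isomorphism, $f$ satisfies both defining conditions of a labeled-graph isomorphism, completing the proof.
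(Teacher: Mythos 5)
Your overall architecture matches the paper's proof exactly: split $\vertices_\Gamma$ into $\vertices_{\Gamma_T}\sqcup\vertices_{\Gamma_A}$, use Lemmas \ref{IsoRestrictsToT} and \ref{IsoRestrictsToA} to feed Theorems \ref{PreciseRigidityForGPIFG} and \ref{PreciseRigidityForRAAG}, glue $t$ and $a$ into $f$, and isolate the mixed $T$--$A$ edges as the one delicate point. Your treatment of the forward direction for mixed edges is also essentially the paper's, though you gloss over one necessary step: $\phi(u)$ need not itself be a CP element of $W(\Sigma)$, so before you can apply Lemma \ref{CentralizerResult} on the $\Sigma$ side you must first conjugate by an inner automorphism $\iota$ (using Lemma \ref{MaximalFiniteSubgroupsAndMCS}) so that $x:=\iota\circ\phi(u)$ is CP; the adjacency of $f(u)$ and $f(v)$ then comes from $f(u)\in\support{x}$ and $f(v)\in\support{\iota\circ\phi(v)}\subset\starofCPword{x}$. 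Applying the centralizer lemma to $v$ inside $W(\Gamma)$, as you suggest, tells you nothing new.

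The genuine gap is in your handling of non-adjacency. You claim the non-adjacency cases ``follow by contraposition from the same centralizer argument,'' but the contrapositive of ``$u,v$ adjacent $\Rightarrow f(u),f(v)$ adjacent'' is ``$f(u),f(v)$ non-adjacent $\Rightarrow u,v$ non-adjacent,'' which is the wrong implication: you need ``$u,v$ non-adjacent $\Rightarrow f(u),f(v)$ non-adjacent.'' Nor does ``symmetry using $\phi^{-1}$'' repair this directly, because the labeled-graph map produced by running the construction on $\phi^{-1}$ need not be $f^{-1}$, so its edge-preservation says nothing about whether $f$ reflects non-edges. The paper closes this gap with a counting argument: the forward direction shows $\Sigma$ has at least as many edges as $\Gamma$, the symmetric argument applied to $\phi^{-1}$ shows $\Gamma$ has at least as many edges as $\Sigma$, hence the edge counts agree and the injection that $f$ induces on edges is forced to be a bijection, which is exactly the statement that $f$ cannot send a non-edge to an edge. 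Without this (or some substitute), your proof does not establish condition (1) in the definition of a labeled-graph isomorphism.
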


\begin{proof}
By Lemma \ref{IsoRestrictsToA} and Theorem
\ref{PreciseRigidityForRAAG}, there exists a labeled-graph
isomorphism $a\co (\Gamma_A, \gfamily_{\Gamma_A}) \to (\Sigma_A,
\gfamily_{\Sigma_A})$ as in the statement of Theorem
\ref{PreciseRigidityForRAAG}.  By Lemma \ref{IsoRestrictsToT} and
Theorem \ref{PreciseRigidityForGPIFG}, there exists a labeled-graph
isomorphism $t\co (\Gamma_T, \gfamily_{\Gamma_T}) \to (\Sigma_T,
\gfamily_{\Sigma_T})$ as in the statement of Theorem
\ref{PreciseRigidityForGPIFG}. Define $f\co  \vertices_\Gamma \to
\vertices_\Sigma$ to be the bijection
$$u \mapsto \left\{%
\begin{array}{cc}
  a(u) & \hbox{if } u \in \vertices_{\Gamma_A}, \\
  t(u) & \hbox{if } u \in \vertices_{\Gamma_T}. \\
\end{array}%
\right.$$  We claim that $f$ is a labeled-graph isomorphism.

It is immediate from the definitions that $G_u \cong G_{f(u)}$ for
each $u \in \vertices_\Gamma$. It remains to show only that $f$
preserves the structure of $\Gamma$. Since we know $a$ and $t$ to be
labeled-graph isomorphisms, it remains to show only that $f$
preserves adjacency between vertices in $\vertices_{\Gamma_A}$ and
vertices in $\vertices_{\Gamma_T}$.

Let $u \in \vertices_{\Gamma_T}$ and $v \in \vertices_{\Gamma_A}$ be
adjacent in $\Gamma$. Since $u$ has finite order in $W(\Gamma)$,
$\phi(u)$ has finite order in $W(\Sigma)$.  By Lemma
\ref{MaximalFiniteSubgroupsAndMCS}, there exists an inner
automorphism $\iota$ of $W(\Sigma)$ such that $\iota \circ \phi(u)$
is a CP element of $W(\Sigma)$.  Write $x := \iota \circ \phi(u)$
and $y := \iota \circ \phi(v)$. Since $x$ and $y$ commute and $x$ is
a CP element of $W(\Sigma)$, we have by Lemma
\ref{CentralizerResult} that $y \in W(\starofCPword{x})$.  Hence
$\support{y} \subset \starofCPword{x}$.  But $f(u) = t(u) \in
\cyclicsupport{\phi(u)} = \support{x}$ and $f(v) = a(v) \in
\cyclicsupport{\phi(v)} = \cyclicsupport{y} \subset \support{y}$.
Hence $f(u)$ and $f(v)$ are adjacent in $\Sigma$.

It follows from the above paragraph that $\Sigma$ has at least as
many edges as $\Gamma$. Similarly, by considering $\phi^{-1}$ we may
show that $\Gamma$ has at least as many edges as $\Sigma$, and hence
the edges of $\Gamma$ and the edges of $\Sigma$ are in one-to-one
correspondence. It follows that if $u \in \vertices_{\Gamma_T}$ and
$v \in \vertices_{\Gamma_A}$ are not adjacent in $\Gamma$, then
$t(u)$ and $a(v)$ are not adjacent in $\Sigma$.
%It follows that $\abs{\edges_\Gamma} \leq
%\abs{\edges_\Sigma}$. Similarly, by considering $\phi^{-1}$ we may
%show that $\abs{\edges_\Sigma} \leq \abs{\edges_\Gamma}$, and hence
%$\abs{\edges_\Gamma} = \abs{\edges_\Sigma}$.

Thus $f$ preserves the structure of $\Gamma$ and $f$ is a
labeled-graph isomorphism.
\end{proof}

\begin{cor}\label{RigidityForT0}
If $(\Gamma, \gfamily_\Gamma)$ and $(\Sigma, \gfamily_\Sigma)$ are
labeled-graphs with directly-indecomposable cyclic vertex groups and
there exists a group isomorphism $\phi\co W(\Gamma) \to W(\Sigma)$,
then there exists a labeled-graph isomorphism $f_0\co (\Gamma_0,
\gfamily_{\Gamma_0}) \to (\Sigma_0, \gfamily_{\Sigma_0})$.
\end{cor}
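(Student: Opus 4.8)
The plan is to deduce this corollary directly from the main theorem together with Lemma \ref{IsomorphismAndT0Isomorphism}, which already records the compatibility between labeled-graph isomorphism and the passage to $T_0$-quotients. Since $(\Gamma, \gfamily_\Gamma)$ and $(\Sigma, \gfamily_\Sigma)$ have directly-indecomposable cyclic vertex groups and $\phi\co W(\Gamma) \to W(\Sigma)$ is a group isomorphism, Theorem \ref{RigidityForGPICG} applies and produces a labeled-graph isomorphism $f\co (\Gamma, \gfamily_\Gamma) \to (\Sigma, \gfamily_\Sigma)$. This is the only place where the group isomorphism $\phi$ is used, and it is where all the substantive work has already been done.

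With $f$ in hand, the remaining task is simply to descend to the $T_0$-quotients, and this is exactly the easy (``only if'') direction of Lemma \ref{IsomorphismAndT0Isomorphism}. I would note that a labeled-graph isomorphism $(\Gamma, \gfamily_\Gamma) \to (\Sigma, \gfamily_\Sigma)$ preserves maximal complete subgraphs, and hence preserves the equivalence relations $\sim_\Gamma$ and $\sim_\Sigma$, since these are defined purely in terms of membership in elements of $\mathrm{MCS}$. Consequently $f$ carries each $\sim_\Gamma$ class to a $\sim_\Sigma$ class and, because it restricts to vertex-group isomorphisms, it respects the induced product vertex groups $G_{\tilde u} = \times_{v \in \tilde u} G_v$ of Definition \ref{T0GraphDefn}. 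Invoking Lemma \ref{IsomorphismAndT0Isomorphism} then yields the desired labeled-graph isomorphism $f_0\co (\Gamma_0, \gfamily_{\Gamma_0}) \to (\Sigma_0, \gfamily_{\Sigma_0})$.

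I expect no genuine obstacle in this argument. The content of the statement is the existence of $f$, which is guaranteed by Theorem \ref{RigidityForGPICG}, and the transition to the $T_0$-quotient is functorial with respect to labeled-graph isomorphisms. The only point meriting an explicit (but routine) check is that $f$ respects the $\sim$-classes and the associated product vertex groups; this follows immediately from the definition of $\sim_\Lambda$ via the sets $\mathrm{MCS}(\Lambda)$ and from Lemma \ref{T0Lemma}(\ref{AdjacenyInT0}), exactly as in the proof of Lemma \ref{IsomorphismAndT0Isomorphism}.
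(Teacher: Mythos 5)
Your proposal is correct and matches the paper's proof exactly: the paper also deduces the corollary immediately from Theorem \ref{RigidityForGPICG} together with the (easy direction of) Lemma \ref{IsomorphismAndT0Isomorphism}. The extra details you supply about $f$ preserving $\mathrm{MCS}$-membership and the product vertex groups are a harmless elaboration of what the paper leaves implicit.
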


\begin{proof}
Follows immediately from Theorem \ref{RigidityForGPICG} and Lemma
\ref{IsomorphismAndT0Isomorphism}.
\end{proof}

%%%%%%%%%%%%%%%%%%%%   End of main body of article
%
%                             References
%
%   BiBTeX users uncomment the following line:
%
\bibliographystyle{amsplain}
%\begin{thebibliography}
\bibliography{RigidityOfGPICGBib}
%\end{thebibliography}

\end{document}